\def\sqr#1#2{{\vcenter{\vbox{\hrule height.#2pt
        \hbox{\vrule width.#2pt height#1pt \kern#2pt
        \vrule width.#2pt}
        \hrule height.#2pt}}}}
\newcommand{\nc}{\newcommand}
\nc{\parent}[1]{$[\![#1]\!]$}
\newtheorem{theorem}{Theorem}[section]
\newtheorem{corollary}{Corollary}[section]
\newtheorem{proposition}{Proposition}[section]
\newtheorem{remark}{Remark}
\newtheorem{definition}{Definition}[section]
\newtheorem{assumption}{Assumption}[section]
\newenvironment{pf-main}{{\sc Proof of Theorem \ref{mainresult}.}\hspace{3mm}}{\qed}
\nc{\cadlag}{c\`{a}dl\`{a}g } \nc{\ba}{\begin{array}}
\nc{\ea}{\end{array}} \nc{\be}{\begin{equation}}
\nc{\ee}{\end{equation}} \nc{\bea}{\begin{eqnarray}}
\nc{\eea}{\end{eqnarray}} \nc{\bean}{\begin{eqnarray*}}
\nc{\eean}{\end{eqnarray*}} \nc{\bu}{\bullet} \nc{\nn}{\nonumber}
\nc{\cA}{{\mathcal A}} \nc{\cB}{{\mathcal B}} \nc{\cC}{{\mathcal
C}} \nc{\cD}{{\mathcal D}} \nc{\bbD}{\mathbb{D}}
\nc{\cG}{{\mathcal G}} \nc{\cF}{{\mathcal F}} \nc{\cS}{{\mathcal
S}} \nc{\cU}{{\mathcal U}} \nc{\cH}{{\mathcal H}}
\nc{\cK}{{\mathcal K}}\nc{\cL}{{\mathcal L}}  \nc{\cM}{{\mathcal
M}} \nc{\cO}{{\mathcal O}} \nc{\cP}{{\mathcal P}}
\nc{\bbE}{\mathbb{E}} \nc{\bbF}{\mathbb{F}}
\nc{\bbEQ}{\mathbb{E}_{\mathbb{Q}}} \nc{\eps}{\varepsilon}
\nc{\bbEP}{\mathbb{E}_{\mathbb{P}}}\nc{\bbL}{\mathbb{L}}
\nc{\what}{\widehat} \nc{\bbP}{\mathbb{P}} \nc{\bbQ}{\mathbb{Q}}
\nc{\del}{\partial} \nc{\Om}{\Omega} \nc{\om}{\omega}
\nc{\bbR}{\mathbb{R}} \nc{\bbN}{\mathbb{N}} \nc{\fps}{$(\Om, \cF,
(\cF_t)_{t\geq 0}, \bbP)$} \nc{\bbC}{\mathbb{C}}
\nc{\bfr}{\begin{flushright}} \nc{\efr}{\end{flushright}}
\nc{\dXt}{\Delta X_{t}} \nc{\dXs}{\Delta X_{s}}
\nc{\bs}{\blacksquare} \nc{\dX}{\Delta X} \nc{\dY}{\Delta Y}
\nc{\dnkx}{\left(X(T^{n}_{k})-X(T^{n}_{k-1})\right)}
\nc{\esssup}{\mathrm{ess}\mbox{ }\mathrm{sup}}
\nc{\essinf}{\mathrm{ess}\mbox{ } \mathrm{inf}}
\nc{\dhats}{\widehat{\delta_s}} \nc{\half} {\frac{1}{2}}
\nc{\ol}{\overline}
\def\rar{\rightarrow}
\nc{\chf}{\mbox{$\mathbf1$}}
\begin{document}

\title{Path transformations for local times of one-dimensional diffusions}
\author{Umut \c{C}etin}
\address{Department of Statistics, London School of Economics and Political Science, 10 Houghton st, London, WC2A 2AE, UK}
\email{u.cetin@lse.ac.uk}
\date{\today}
\begin{abstract}
Let $X$ be a regular one-dimensional transient diffusion and $L^y$ be its local time at $y$. The stochastic differential equation (SDE) whose solution corresponds to the process $X$ conditioned on $[L^y_{\infty}=a]$ for a given $a\geq 0$ is constructed and a new path decomposition result for transient diffusions is given. In the course of the construction {\em Bessel-type motions} as well as their SDE representations are studied.   Moreover, the Engelbert-Schmidt theory for the weak solutions of one dimensional SDEs is extended to the case when the initial condition is an entrance boundary for the diffusion. This extension was necessary for the construction of the Bessel-type motion which played an essential part in the SDE representation of $X$ conditioned on $[L^y_{\infty}=a]$.
\end{abstract}
\maketitle

\section{Introduction} 
Conditioning  a given Markov process $X$ is a well-studied subject which has become synonymous with the term {\em $h$-transform}. If one wants to condition the paths of $X$ to stay in a certain set, the classical recipe consists of finding an appropriate excessive function $h$,  defining the transition probabilities of the conditioned process via  $h$, and constructing on the canonical space a Markov process $X^h$ with these new transition probabilities using standard techniques. This procedure is called an $h$-transform and its origins go back to  Doob and his study of boundary limits of Brownian motion \cite{Doob1, Doob2}. If $h$ is a minimal excessive function with a pole at $y$ (see Section 11.4 of \cite{ChungWalsh} for definitions), then $X^h$ is the process $X$ conditioned to converge to $y$ and killed at its last exit from $y$. We  refer the reader to Chapter 11 of \cite{ChungWalsh} for an in-depth analysis of $h$-transforms and their connections with time reversal and last passage times. 

If $X$ is a regular transient diffusion taking values in some subset of $\bbR$, $h:=u(\cdot,y)$ is a minimal excessive function for every $y$ in its state space, where $u$ is the potential density of $X$. Moreover, $y$ is the unique pole of this excessive function. Thus, the preceding discussion suggests that this $h$-transform conditions $X$  to converge to $y$ and kills it at its last exit from $y$. For a thorough discussion of $h$-transforms for one-dimensional diffusions and the proofs of certain results that are considered to be folklore in the literature we refer the reader to a recent manuscript by Evans and Hening \cite{EH}. The recent works of Perkowski and Ruf \cite{PerRuf} and Hening \cite{Hening} also consider  specific cases of conditioning for one-dimensional diffusions. 

In this paper we are interested in conditioning a one-dimensional regular transient diffusion on the value of its local time  at its lifetime. We assume that the diffusion cannot be killed in  the interior of its state space. It is well-known that $(X,L^y)$ is a two-dimensional Markov process, where $L^y$ is the local time of $X$ at level $y$. If we would like to apply an $h$-transform to achieve our conditioning, we need to find a minimal excessive function of the pair $(X,L^y)$ with a suitable pole so that the local time of the $X^h$ equals a given number, say, $a\geq 0$ at its lifetime. The problem with this approach is that it requires the knowledge of the potential density of the Markov pair $(X,L^y)$, which is in general not easily obtained or characterised. Moreover, it will require a killing procedure.

We shall follow a different approach and construct the conditioned process as a weak solution to a stochastic differential equation (SDE) with a suitably chosen drift and an additional  randomisation, which in essence is what deviates our approach from that of an $h$-transform. Moreover, there will not be any killing involved. On our way to constructing this SDE we will obtain the following contributions that are of significant interest in their own right:
\begin{itemize}
\item {\em An extension of the Engelbert-Schmidt theory for the weak solutions of one-dimensional SDEs.} The Engelbert-Schmidt theory constructs the weak solutions of one-dimensional SDEs as a time and scale change of a Browanian motion. This construction fails if the initial condition is an entrance boundary. We extend the theory when the initial condition is an entrance boundary by a time and scale transformation of a 3-dimensional Bessel process. 
\item {\em SDE representation for Bessel-type motions.} We obtain the SDE representation for the excursions of $X$ away from a point conditioned to last forever.   
\item {\em A new path-decomposition result for transient diffusions.} We will show that we can obtain the transient diffusion by suitably pasting together a recurrent transformation, which is introduced in \cite{Crt}, and a Bessel-type motion. As such, this will give an alternative way of simulating one-dimensional diffusions.
\end{itemize}

Returning back to  our main point of interest, that is, constructing an SDE whose solution coincides in law with $X$ conditioned on $L^y_{\infty}$, we shall next see in brief how the above contributions play a role in this construction.  Since we are interested in obtaining an SDE for the conditioned process this obviously necessitates the original process, $X$,  being a solution of  an SDE. In Section \ref{s:recipe} we impose the standard Engelbert-Schmidt conditions in order to ensure that $X$ itself is the unique weak solution of an SDE upto a, possibly finite, exit time from its state space. Our aim is to  construct an SDE -- for which  weak uniqueness holds --  such that the law of its solutions coincides with the law of $X$ conditioned on the null set $[L^y_{\infty}=a]$. As such, there will be no killing involved in the interior of the state space  in our conditioning. 

At the end of Section \ref{s:recipe} we give a recipe for constructing the SDE with the above property and a brief warning about pitfalls that one may encounter depending on the limiting values of $X$. However, difficulties aside, the desired conditioning should be done in two steps: In Step 1 we have to choose a suitable drift that makes sure that the solution keeps hitting $y$ until the local time process equals $a$. As soon as this is achieved, in Step 2, we have to choose a new drift that prevents the solutions hitting $y$ again and, thus, keeping the local time process constant at $a$. 

Since we want to make sure that $L^y_{\infty}=a$ with probability one for the conditioned process, it is necessary that we have to choose a drift term in Step 1 to transform $X$ into a recurrent process. Indeed, if the solution of the SDE considered in Step 1 is  transient, there is a non-zero probability for its solution to drift away to the cemetery state before its local time process hits $a$. To this end we use the concept of a {\em recurrent transformation} that is developed in \cite{Crt}. A particular example of a recurrent transformation that is borrowed from \cite{Crt} will give us the drift that achieves the Step 1 of our conditioning procedure. 

Section \ref{s:excursion} prepares the drift terms that one would use in Step 2 of the conditioning procedure. Since the solution of the SDE  at the end of Step 1 equals $y$, and the new drift should be chosen in a way to keep the conditioned process away from $y$, the drift term that must be employed in the second step of our construction leads to  solutions that are linked to the excursions of $X$ away from $y$. Indeed, if $y=0$ and  $X$ is a Brownian motion killed at $0$, the  SDE that we obtain in Step 2 is the SDE associated to the 3-dimensional Bessel process. In Section \ref{s:excursion} we give the SDE characterisations of these {\em Bessel-type motions} -- a term adapted from McKean  in his study of excursions of diffusions \cite{MKexc} -- and prove a time reversal result akin to those that can be found in the seminal paper of Williams \cite{W73} using a theorem due to Nagasawa \cite{Nagasawa}. 

After all these preparations and the other related results of independent interest, the SDE that is associated to the desired conditioning is constructed in Section \ref{s:main}. Weak uniqueness of its solutions is proven in Theorem \ref{t:Lbrdige} and Corollary \ref{c:corB} establishes that the law of its solutions possesses the desired bridge property using an enlargement of filtration technique. Corollary \ref{c:corB} also paves the way for a new path decomposition result for transient diffusions.  Theorem \ref{t:pathdecomp} is reminiscent of Williams' path decomposition for the Brownian motion killed at $0$ and constructs the original process, $X$, by pasting together a recurrent transformation and a time-reversed Bessel-type motion.

Another contribution of this paper is to the solutions of one dimensional time-homogeneous SDEs. In Step 2 we construct an SDE for a one-dimensional diffusion, where the initial condition is an entrance boundary\footnote{In the terminology of Ito-McKean this corresponds to an entrance-not-exit boundary.}. The existence and uniqueness of solutions to such SDEs do not follow from the Engelbert-Schmidt theory, which constructs the solution  from a time-change applied to a Brownian motion. As explained in Section \ref{s:recipe}, the reason for the non-applicability of the available theory is due to   the scale functions being infinite at entrance boundaries forcing one to start the Brownian motion at $+\infty$  if one wants to use the method of Engelbert and Schmidt.  Theorem \ref{t:SDEent} extends the Engelbert-Schmidt theory to the case when the initial condition is an entrance boundary. The proof uses  similar tools employed in the proof of the classical result of Engelbert and Schmidt without the entrance boundary. However, we construct the  weak solution  as a time-changed Bessel process as opposed to a time-changed Brownian motion.

Apart from the contributions listed above another motivation of this work stems from the studies of financial equilibrium under asymmetric information in the setting of Kyle \cite{Kyle} and Back \cite{Back}.   Consider a financial asset that is traded in a market that consists of an insider who knows the value of the asset, market makers and uninformed traders. Suppose that the asset is an annuity that pays $r(X_t)dt$ over the time interval $[t,t+dt)$, where $r$ is an increasing nonnegative function and $X$ is a state variable depending on the total demand of the asset. As in \cite{Krh} and \cite{Back-Baruch} let us also suppose that trading comes to an end at an exponential time with parameter $\alpha$. Then the equilibrium considerations as in \cite{Krh} imply that the optimal strategy of the insider is given by the drift of a particular transient  one-dimensional diffusion $Y$n  conditioned on the event $\{\exp(\int_0^{\infty}r(Y_t)dt)=V\}$, where $V$ is a random variable only known by the insider. 

Note that $\int_0^{\infty}r(Y_t)dt$ is  a {\em perfect continuous additive functional} (PCAF) of $Y$. Thus, understanding the behaviour of diffusions conditioned on the terminal value of a continuous additive functional will be quite useful for the aforementioned models of financial equilibrium. The present paper is the first step in this direction given that every PCAF $A$ of a one-dimensional diffusion can be written as
\[
A_t= \int_l^r L^y_t \mu_A(dy),
\]
where $L^y$ is the local time at level $y$ and $\mu_A$ is the so-called {\em Revuz measure} (see, .e.g., Exercise 75.12 in \cite{GTM}). Although the analogous study of conditioning with general PCAF is not going to be a trivial consequence of the present paper, it will illustrate the approach that must be taken. We leave this issue for future research.

The recurrent process in Step 1 of our conditioning is closely related the {\em bang-bang process} constructed in Section 5.3 of Evans and Henning \cite{EH}. Therein the authors follow the construction of a resurrected process from Fitzsimmons \cite{Fit91}, which is in fact repeated (unboundedly many) applications  of an $h$-transform using the function $u(\cdot, y)$: construct the $h$-transformed path until the last hitting time of $y$ and then, instead of killing the paths, start a new $h$-transform from point $y$. The resulting process will have the same distribution as the solution of (\ref{e:step1}) in Step 1 of our construction. We follow a different approach and use the construction proposed in \cite{Crt}, which has a  natural interpretation as a locally absolutely continuous Girsanov change of measure. One advantage of this approach is that it directly gives an explicit relationship between the laws of the original process and its recurrent transform (see \ref{e:ACstep1} and \ref{e:survival}), which was useful in proving Theorem \ref{t:Lbrdige} -- one of the main results of this paper. 

Moreover, the construction of the recurrent process using the SDE in (\ref{e:step1}) is more easily implementable in practice. Namely, one can apply a simple Euler scheme using only the knowledge of the drift and diffusion coefficients of the original process. On the other hand, the bang-bang process of Evans and Hening does not admit an easy `adapted' construction. Indeed, one needs to first construct the entire trajectory of the $h$-transform over the infinite time interval $(0,\infty)$ to find its last hitting time of $y$, which is not a stopping time. The construction over the infinite time interval will be needed even if one is only interested in the behaviour of the bang-bang process on, say, $[0,t]$ as one cannot know for sure whether the last hitting time of $y$ is before $t$ or not. Furthermore, this procedure has to be repeated  possibly infinitely many times.   However, leaving the practical difficulties aside, if one is solely interested in an {\em  abstract}  construction of the particular recurrent process of (\ref{e:step1}), one can alternatively use the bang-bang process of Evans and Hening.

A final but rather obvious remark on the type of conditioning considered here is that the same ideas, after minor modifications, can be used to condition $X$ so that its local time at a given point stays below a fixed level at all times. The resulting diffusion will be clearly transient and can be considered as a specific case of the problem of the weak convergence of measures $(P_t^x)_{t \geq 0}$, where $P_t^x:=P^x(\cdot|L^x_s\leq f(s), s\leq t)$ and $f$ is a given function. This problem is studied in detail when $X$ is a Brownian motion by Kolb and Savov \cite{KolbS}. It will be interesting  to see whether the approach developed herein can provide insights  for this problem in case of  general one-dimensional diffusions.

The outline of the paper is as follows. Section \ref{s:recipe} reviews some background material on one-dimensional diffusions that will be used often in the paper  and gives a recipe for the construction of the SDE to achieve our desired conditioning. Section \ref{s:rectr}  finds the drift that will be necessary to complete Step 1. The drift term that will be used in the second step of our construction is discussed in Section \ref{s:excursion} along with its connection to the excursions of $X$. Our main results on the SDE representation of $X$ conditioned on its local time at its lifetime and a new path decomposition result for transient diffusions are contained in Section \ref{s:main}. Finally, Section \ref{s:examples} illustrates the findings via some specific examples.

\section{Preliminaries and a recipe for the conditioning} \label{s:recipe}
Let $X$ be a regular transient diffusion on $(l,r)$, where $ -\infty \leq l <r \leq \infty$. Such a diffusion is uniquely characterised by its scale function, $s$, and speed measure, $m$, defined on the Borel subsets of the open interval $(l,r)$. We assume that if any of the boundaries are reached in finite time, the process is killed and sent to the cemetery state, $\Delta$. This is the only instance when the process can be killed, we do not allow killing inside $(l,r)$. Consistent with the term `killing' $\Delta$ is assumed to be an absorbing state. The set of points that can be reached in finite time starting from the interior of $(l,r)$ and the entrance boundaries will be denoted by $I$. That is, $I$ is the union of $(l,r)$ with the regular, exit or entrance boundaries.  The law induced on $C(\bbR_+,I)$,  the space of $I$-valued continuous functions on $[0,\infty)$, by $X$ with $X_0=x$ will be denoted by $P^x$ as usual, while $\zeta$ will correspond to its lifetime. In what follows we will often replace $\zeta$ with $\infty$ when dealing with the limit values of the processes as long as no confusion arises. The filtration $(\cF_t)_{t \geq 0}$ will correspond to the universal completion of the natural filtration of $X$, and therefore is right continuous. Recall that in terms of the first hitting times,   $T_y:=\inf\{t> 0: X_t=y\}$ for $y \in (l,r)$, the regularity amounts to $P^x(T_y<\infty)>0$ whenever $x$ and $y$ belongs to the open interval $(l,r)$. This assumption entails in particular that $s$ is strictly increasing and continuous (see Proposition VII.3.2 in \cite{RY}),  on $(l,r)$, and $0<m((a,z)<\infty$ for all $l<a<z<r$ (see Theorem VII3.6 and the preceding discussion in \cite{RY}). 

The hypothesis that $X$ is transient implies that at least one of $s(l)$ and $s(r)$ must be finite. Since $s$ is unique only upto an affine transformation, we will assume without loss of generality that $s(l)=0$ and  $s(r)=1$ whenever they are finite. In view of our foregoing assumptions one can easily deduce that $X_{\zeta -}\in \{l,r\}$. We refer the reader to \cite{BorSal} for a summary of results and references on one-dimensional diffusions. The definitive treatment of such diffusions is, of course, contained in \cite{IM}. 

 As we are interested in the path transformations of local times via SDEs, we further impose the so-called {\em Engelbert-Schmidt conditions}  to ensure that  $X$ can be considered as a solution of an SDE. That is, we shall assume the existence of measurable functions $\sigma:(l,r)\mapsto \bbR$ and $b: (l,r)\mapsto \bbR$ such that 
\be \label{e:ESR}
\sigma(x) >0  \mbox{ and } \exists \eps >0 \mbox{ s.t. } \int_{x-\eps}^{x+\eps}\frac{1+ |b(y)|}{\sigma^2(y)}dy <\infty \mbox{ for any $x \in (l,r)$.}
\ee
Under this assumption (see \cite{ES} or Theorem 5.5.15 in \cite{KS}) there exists a {\em unique} weak solution (upto the exit time from the interval $(l,r)$) to the SDE
\[
X_t=x+\int_0^t\sigma(X_s)dB_s + \int_0^t b(X_s)ds, \qquad t < \zeta,
\]
where $\zeta=\inf\{t\geq 0: X_{t-}\in \{l,r\}\}$ and $l<x<r$.  
Moreover,  the condition (\ref{e:ESR}) further implies one can take
\be \label{e:scalespeed}
s(x)=\int_C^x \exp\left(-2 \int_c^z \frac{b(u)}{\sigma^2(u)}du\right)dz
\; \mbox{ and }\;  m(dx)=\frac{2}{s'(x)\sigma^2(x)}dx, \mbox{ for some } (c,C) \in (l,r)^2.
\ee

The Engelbert-Schmidt conditions ensure the existence and uniqueness of weak solutions to above SDE starting from $x \in (l,r)$. However, it should be noted that the results of Engelbert and Schmidt do not apply if the starting point is an entrance boundary since the scale function is not finite at such an endpoint.  The following theorem, whose proof is delegated to the Appendix, extends this theory when $x$  is an entrance boundary for a transient diffusion. A quick glance at the proof reveals that the solution is obtained as a time and scale change of a 3-dimensional Bessel process starting from its entrance boundary as opposed to the standard Engelbert-Schmidt theory that constructs the solution as a time and scale change of a Brownian motion.  
\begin{theorem} \label{t:SDEent} Suppose that $X$ is a regular transient diffusion on $(l,r)$ such that its scale function and speed measure are defined by (\ref{e:scalespeed}), where $\sigma:(l,r)\mapsto \bbR$ and $b: (l,r)\mapsto \bbR$   are measurable functions satisfiying (\ref{e:ESR}).  Assume further that $X$ has  an entrance boundary. Then there exists a unique weak solution to 
\be \label{e:SDEent}
X_t=x+\int_0^t\sigma(X_s)dB_s + \int_0^t b(X_s)ds, \qquad t < \zeta,
\ee
where $\zeta=\inf\{t> 0: X_{t-}\in \{l,r\}\}$ and $x$ is the entrance boundary\footnote{The fact that $X$ is transient implies there exists at most one entrance boundary.}.
\end{theorem}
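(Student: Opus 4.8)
The plan is to mimic the Engelbert--Schmidt construction, replacing the driving Brownian motion by a three-dimensional Bessel process, which is the canonical diffusion possessing an entrance boundary. Without loss of generality I would take the entrance boundary to be $l$, so that (by the discussion following \eqref{e:scalespeed}, the transience hypothesis, and the normalisation $s(r)=1$) one has $s(l)=-\infty$ while $s(r)=1$, and $X$ is attracted to $r$. The reason a Bessel process is the right object is that the $\mathrm{BES}(3)$ process $W$ started at $0$ has $0$ as an entrance-not-exit boundary, its scale function is $s_W(x)=-1/x$, and $s_W(0+)=-\infty$; thus the natural-scale image of $W$ issues from $-\infty$, exactly the feature that an ordinary Brownian motion cannot reproduce and which forces the classical theory to start a Brownian motion at $\pm\infty$.

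For existence I would first pass to natural scale and set $M_t:=1-1/W_t$, a continuous local martingale on $(-\infty,1)$ with $M_0=-\infty$ and $M_\infty=1$; that is, a diffusion in natural scale issuing from the entrance boundary $-\infty$ and attracted to the finite boundary $1$. Writing $n$ for the image of the speed measure $m$ under $s$ (the speed measure of the target diffusion in natural scale), I would time-change $M$ by the additive functional
\[
A_t=\int_{(-\infty,1)} \ell^a_t(M)\, n(da),
\]
where $\ell^{\,a}(M)$ is the jointly measurable family of local times of $M$, and set $Y_u=M_{\tau_u}$ with $\tau_u=\inf\{t:A_t>u\}$ and $X_u=s^{-1}(Y_u)$. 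As in the classical theory, $Y$ is then the natural-scale diffusion with speed measure $n$ (the time change reproducing, in particular, the correct behaviour at the finite boundary $1$, since it alters only the speed and leaves the scale fixed), so $X=s^{-1}(Y)$ is the diffusion with scale $s$ and speed $m$. Because these are given by \eqref{e:scalespeed}, the martingale problem for $X$ has the generator $\tfrac12\sigma^2\,\partial_{xx}+b\,\partial_x$ in the interior, and the usual representation of the continuous local martingale part of $X$ furnishes a Brownian motion $B$ for which $X$ solves \eqref{e:SDEent}. By construction $X_0=s^{-1}(-\infty)=l$, so the solution issues from the entrance boundary.

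For weak uniqueness I would argue that any weak solution $X$ with $X_0=l$ must leave $l$ instantaneously and never return: this is forced by $\sigma>0$ together with the fact that $s(l)=-\infty$ makes $l$ unattainable in scale from the interior. Hence $X_t\in(l,r)$ for every $t>0$, so for each $\eps>0$ the shifted process $(X_{\eps+t})_{t\ge 0}$ is a weak solution started in the interior, whose conditional law given $X_\eps$ is uniquely determined by the classical theorem (\cite{ES}). It then remains to pin down the one-dimensional law of $X_\eps$ and to let $\eps\downarrow 0$; here I would invoke that the transition semigroup of a regular diffusion is determined by $(s,m)$ up to and including an entrance boundary, the entrance-not-exit classification carrying no free parameter (see \cite{IM}), so that the entrance law $p_\eps(l,\cdot)$ is unique and the finite-dimensional distributions of $X$ are forced. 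Equivalently, one may transport the classical uniqueness of $\mathrm{BES}(3)$ issued from its entrance point $0$ through the deterministic scale-and-time change above.

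The main obstacle will be the control of the clock $A$ near the entrance boundary: one must show that $A_t<\infty$ for $t>0$, that $A_{0+}=0$, that $A$ is strictly increasing, and that $\tau$ is continuous, so that $X$ is a genuine continuous process entering from $l$ at time $0$ rather than spending positive time at, or failing to be defined near, the boundary. Finiteness is precisely where the entrance condition enters: in natural-scale coordinates it amounts to the finite first-moment bound $\int_{-\infty}^{a_0}(a_0-a)\,n(da)<\infty$ on the speed measure near $-\infty$ (equivalently $\int_{l}^{x_0}(s(x_0)-s(x))\,m(dx)<\infty$ together with $s(l)=-\infty$), which is exactly the statement that the mean descent time of $M$ from the boundary is finite. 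The same estimate is what legitimises the semimartingale decomposition of $X$ down to $t=0^+$, where $\sigma$ and $b$ may be unbounded, and what makes the limit $\eps\downarrow 0$ in the uniqueness argument effective.
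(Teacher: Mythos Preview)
Your existence construction is essentially the paper's: both build the solution as a scale-and-time change of a three-dimensional Bessel process started at its entrance boundary $0$, the key point being that the natural-scale image of $\mathrm{BES}(3)$ issues from $-\infty$. The paper carries this out by applying It\^o's formula directly to $Y=s^{-1}(p(R))$ (with $p(x)=-1/x$) and then time-changing via an explicit additive functional of $R$, whereas you phrase the time change through local times of the natural-scale martingale $M$; these are the same computation via the occupation-times formula, up to the harmless normalisation $s(r)=1$ versus the paper's $s(r)=0$.

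For uniqueness the routes diverge. The paper inverts the construction: given any weak solution $X$ it forms $p^{-1}(s(X))$, verifies by an explicit time change that this is a $\mathrm{BES}(3)$ process issuing from $0$ (stopped at $r$), and concludes from the well-posedness of the Bessel SDE. Your primary argument instead restricts to $(X_{\eps+t})_{t\ge 0}$, applies interior Engelbert--Schmidt uniqueness conditionally on $X_\eps$, and then pins down the law of $X_\eps$ by invoking the uniqueness of the entrance law from general diffusion theory. This can be made to work, but it imports a nontrivial external fact (continuity of the transition semigroup up to the entrance boundary, as in \cite{IM}), and one must be careful that the appeal to ``the entrance law of the diffusion with scale $s$ and speed $m$'' is not circular before one has identified the weak solution with that diffusion. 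The paper's inversion is more self-contained and, as a by-product, delivers the precise finiteness behaviour of $A_\infty$ according to whether $r$ is regular, exit, or natural. You do mention the inversion route in your final sentence, and that is exactly what the paper does.
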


We summarise the assumptions on $X$ in the following
\begin{assumption}\label{a:X}  $X$ is a regular transient diffusion on $(l,r)$, where $ -\infty \leq l <r \leq \infty$, with no killing inside $(l,r)$. Moreover, whenever $X_0$ is an entrance boundary or belongs to $(l,r)$, $X$ is the unique weak solution to 
\be \label{e:sdeR}
X_t=X_0+\int_0^t\sigma(X_s)dB_s + \int_0^t b(X_s)ds, \qquad t < \zeta,
\ee
where $\zeta=\inf\{t>0: X_{t-}\in \{l,r\}\}$, and  $\sigma:(l,r)\mapsto \bbR$ and $b: (l,r)\mapsto \bbR$ are measurable functions satisfying (\ref{e:ESR}).  Its scale function is chosen so that  $s(l)=0$ and  $s(r)=1$ whenever they are finite.
\end{assumption}

We will denote by $(\tilde{L}_t^x)_{x \in (l,r)}$  the family of diffusion local times associated to $X$. Recall that the occupation times formula for the diffusion local time is given by
\[
\int_0^t f(X_s)ds =\int_{l}^{r}f(x)\tilde{L}^x_t m(dx).
\]
From above one can easily deduce the relationship a.s..
\[
L^x= \frac{2}{s'(x)}\tilde{L}^x,
\]
where $L^x$ is the semimartingale local time of $X$ at $x$ defined by 
\[
\int_0^t f(X_s)\sigma^2(X_s)ds =\int_{l}^{r}f(x)L^x_t dx.
\]
\begin{remark}
It is clear from the above relationship that the diffusion local time is not invariant under absolutely continuous changes of measures, which change the scale function. This distinction will be important in the sequel when we consider absolutely continuous changes of measure to obtain the SDEs that achieve the conditioning we are after. Note that  the semimartingale local time can be defined as a limit involving the quadratic variation of $X$ (see Corollary VI.1.9 in \cite{RY}), which remains intact after an absolutely continuous measure change.   
\end{remark}
Any regular transient diffusion on $(l,r)$ has a finite potential density $u: (l,r)^2 \mapsto \bbR_+$, where $\bbR_+:=[0,\infty)$, with respect to its speed measure (see p.20 of \cite{BorSal}). That is, for any bounded and continuous function, $f$, vanishing at accessible boundaries
\[
Uf(x):=\int_0^{\infty}E^x[f(X_t)]dt=\int_l^r f(y)u(x,y)m(dy).
\]

In the case of one-dimensional transient diffusions that we consider herein the distribution of $L^y_{\infty}$ is known explicitly in terms of the potential density (see p.21 of \cite{BorSal}). In particular, 
\be \label{e:lawLT}
P^y(L^y_{\infty}>t)=P^y\left(\tilde{L}^y_{\infty}>\frac{s'(y)t}{2}\right)=\exp\left(-\frac{s'(y)t}{2u(y,y)}\right).
\ee
Therefore, for an arbitrary starting point, $x$, in $(l,r)$ and any Borel subset, $E$, of $\bbR_+$ we have
\be \label{e:L_dist}
P^x(L^y_{\infty}\in E|\cF_t)=\chf_{[L^y_t \in E]}(1-\psi(X_t,y))+ \frac{s'(y)\psi(X_t,y)}{2 u(y,y)}\int_E \chf_{[a>L^y_t]}\exp\left(-\frac{s'(y)(a-L^y_t)}{2u(y,y)}\right)da,
\ee
where
\[
\psi(x,y):=P^x(T_y<\infty),
\]
in view of the strong Markov property of $X$. 

It will also prove useful in the sequel to consider  the {\em inverse local time}:
\[
\tau^y_a =\inf\{t\geq 0: L^y_t >a\}.
\]
$(\tau^y_a)_{a \geq 0}$ is right continuous and, moreover,
\[
\tau^y_{a-}=\inf\{t\geq 0: L^y_t \geq a\}.
\]
Clearly, the interval $[\tau^y_{a-},\tau^y_{a}]$ corresponds to an interval of constancy for $L^y$ or, equivalently, an excursion of $X$ from the point $y$. 

When only one of $s(l)$ and $s(r)$ is finite, the terminal value of $X$ equals either $l$ or $r$, in which case $X_{\infty}$ and $L^y_{\infty}$ are trivially independent no matter where the diffusion has started. If both $s(l)$ and $s(r)$ are finite, the situation is more delicate. The following result that illustrates this must be well-known. We nevertheless provide a proof for the convenience of the reader.
\begin{proposition} \label{p:LTRindep} Suppose that $X$ is a regular transient diffusion on $(l,r)$ with $s(l)=0=1-s(r)$. Then, $X_{\infty}$ and $L^y_{\infty}$ are independent under $P^x$ if and only if $x=y$.
\end{proposition}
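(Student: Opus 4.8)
The plan is to reduce everything to two structural facts about the terminal value $X_\infty$, which under Assumption~\ref{a:X} lies in $\{l,r\}$. First, since $s(l)=0$ and $s(r)=1$, the process $s(X_t)$ is a continuous local martingale taking values in $[0,1]$, hence a bounded martingale that converges a.s.\ to $s(X_\infty)\in\{0,1\}$; optional stopping then gives
\[
P^x(X_\infty=r)=s(x),\qquad P^x(X_\infty=l)=1-s(x),\qquad x\in(l,r).
\]
Second, because the local time $L^y$ does not increase before the first hitting time $T_y$, the value of $L^y_\infty$ is governed entirely by the path after it first reaches $y$; in particular $\{L^y_\infty=0\}=\{T_y=\infty\}$ up to a null set.

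For the ``if'' part I would set $x=y$ and prove directly that, for every $t\ge 0$,
\[
P^y(X_\infty=r,\,L^y_\infty>t)=s(y)\,\exp\!\left(-\tfrac{s'(y)t}{2u(y,y)}\right)=P^y(X_\infty=r)\,P^y(L^y_\infty>t),
\]
which, together with (\ref{e:lawLT}) and a monotone-class argument, yields independence (the atom of $L^y_\infty$ at $0$ being $P^y$-null). The key observation is that $\{L^y_\infty>t\}=\{\tau^y_t<\infty\}$, that $\tau^y_t$ is a stopping time, and that $X_{\tau^y_t}=y$ on $\{\tau^y_t<\infty\}$ since $L^y$ increases only on $\{X=y\}$. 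Applying the strong Markov property at $\tau^y_t$, the post-$\tau^y_t$ process is a fresh copy of $X$ started at $y$, so $P^y(X_\infty=r\mid\cF_{\tau^y_t})=s(y)$ on $\{\tau^y_t<\infty\}$, independently of the past; integrating and using $P^y(\tau^y_t<\infty)=P^y(L^y_\infty>t)$ gives the displayed identity.

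For the ``only if'' part I would argue by contradiction via the atom at $0$. Suppose $x<y$ (the case $x>y$ is symmetric, with the roles of $l$ and $r$ exchanged). By path-continuity a trajectory started at $x<y$ that never reaches $y$ stays strictly below $y<r$, so $\{L^y_\infty=0\}=\{T_y=\infty\}\subseteq\{X_\infty=l\}$, whence $P^x(X_\infty=r,\,L^y_\infty=0)=0$. On the other hand transience gives $\psi(x,y)<1$, so $P^x(L^y_\infty=0)=1-\psi(x,y)>0$, while interiority gives $P^x(X_\infty=r)=s(x)\in(0,1)$; therefore $P^x(X_\infty=r)\,P^x(L^y_\infty=0)=s(x)\bigl(1-\psi(x,y)\bigr)>0$. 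The two quantities differ, so independence fails whenever $x\ne y$.

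I expect the main obstacle to be the rigorous justification of the strong Markov step in the ``if'' direction: one must confirm that $X_{\tau^y_t}=y$ on $\{\tau^y_t<\infty\}$, so that the path genuinely regenerates at $y$, and that $\{X_\infty=r\}$ is measurable with respect to the shift by $\tau^y_t$, so that its conditional probability is exactly the unconditional $P^y(X_\infty=r)=s(y)$. The remaining ingredients—the martingale identity for $P^x(X_\infty=r)$ and the strict inequality $\psi(x,y)<1$ for $x\ne y$—are routine consequences of transience and the Engelbert--Schmidt framework already in force.
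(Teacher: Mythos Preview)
Your proof is correct and takes a genuinely different route from the paper's. For the ``if'' direction, the paper conditions on $[X_\infty=r]$ via the $h$-transform with $h=s$, tracks how the speed measure, potential kernel, and diffusion local time transform under this change of measure, and computes $P^x(\tilde L^y_\infty>t\mid X_\infty=r)=\exp(-t/u(y,y))$ explicitly; independence at $x=y$ then follows because the prefactor $P^x(T_y<\infty)$ in the unconditional tail equals $1$ exactly when $x=y$. Your argument bypasses $h$-transforms entirely and instead applies the strong Markov property at the stopping time $\tau^y_t$: since $X_{\tau^y_t}=y$ on $[\tau^y_t<\infty]$, the post-$\tau^y_t$ path has law $P^y$, so $P^y(X_\infty=r\mid\cF_{\tau^y_t})=s(y)$ there, and integrating gives the product form. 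For the ``only if'' direction the paper again reads off the discrepancy from the explicit conditional law (the factor $\psi(x,y)\neq 1$), whereas you exploit the atom $P^x(L^y_\infty=0)=1-\psi(x,y)>0$ together with the topological fact that, for $x<y$, this atom lives entirely on $[X_\infty=l]$. Your approach is more elementary and self-contained; the paper's approach, in exchange for the $h$-transform machinery, delivers the full conditional distribution of $L^y_\infty$ given $X_\infty$ rather than merely the independence statement.
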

\begin{proof}
Observe that the independence of $X_{\infty}$ and $L^y_{\infty}$ is equivalent to the independence of $X_{\infty}$ and $\tilde{L}^y_{\infty}$. Suppose  that $x \leq y$ and observe from (\ref{e:L_dist}) that 
\[
P^x(\tilde{L}^y_{\infty}>t)=P^x(T_y <\infty) \exp\left(-\frac{t}{u(y,y)}\right).
\]
Next, consider the $h$-transform of $X$ using $s$ as the $h$-function to obtain a diffusion with transition function
\[
P^s_t(x,dz)= \frac{s(z)}{s(x)}P_t(x,dy),
\]
where $P_t$ is the original transition function of $X$. The resulting process corresponds to the conditioning of $X$ so that $X_{\infty}=r$ and it is a regular diffusion on $(l,r)$ with the speed measure, $m^s$, and the potential density\footnote{Note that this is the density with respect to the new speed measure $m^h$.} $u^s$ given by (see Paragraph 31 in Chap. II of \cite{BorSal})
\[
m^s(dz)=s^2(z)m(dz)\qquad u^s(x,z)=\frac{u(x,z)}{s(x)s(z)}.
\]
Denoting the law of the $h$-transform with $P^{s,x}$ when it starts at $x$, we have, in particular,
\[
P^{s,x}(\tilde{L}^y_{\infty}>t)=P^x(\tilde{L}^y_{\infty}>t|X_{\infty}=r).
\]
Notice that, since the speed measure has changed, the diffusion local time of the $h$-transform is no longer represented by $\tilde{L}^y$. Let $\tilde{L}^{s,x}$ denote the diffusion local time with respect to $m^s$ at level $x$. In view of the occupation times formula and the fact that $P^{s,x}\sim P^x$ on $\cF_t$ one has
\[
\int_{l}^{r}f(x)\tilde{L}^x_t m(dx)=\int_0^t f(X_s)ds =\int_{l}^{r}f(x)\tilde{L}^{s,x}_t s^2(x)m(dx), \quad P^{s,x}\mbox{-a.s.},
\]
which yields $P^{s,x}\mbox{-a.s.}$ $\frac{\tilde{L}^y_{t}}{s^2(y)}=\tilde{L}^{s,y}_{t}$ for all $t$ and $y$ due to the joint continuity of diffusion local times. Since $\tilde{L}^y_t$ increases to $\tilde{L}^y_{\infty}$ under $P^x$ as $t \rar \infty$, so it does under $P^{s,x}$ since $P^{s,x}\ll P^x$. Moreover, $\tilde{L}^{s,y}_t$ increases to $\tilde{L}^{s,y}_{\infty}$ under $P^{s,x}$ as $t \rar \infty$, too.  Therefore, $\frac{\tilde{L}^y_{\infty}}{s^2(y)}=\tilde{L}^{s,y}_{\infty}, \, P^{s,x}$-a.s. and
\[
P^x(\tilde{L}^y_{\infty}>t|X_{\infty}=r)=P^{s,x}\left(\tilde{L}^{s,y}_{\infty}>\frac{t}{s^2(y)}\right)=\exp\left(-\frac{t}{s^2 u^s(y,y)}\right)=\exp\left(-\frac{t}{u(y,y)}\right),
\]
since $P^{s,x}(T_y<\infty)=1$ due to the conditioning. Thus,
\[
P^x(\tilde{L}^y_{\infty}>t|X_{\infty}=r)=P^x(\tilde{L}^y_{\infty}>t)
\]
if and only if $x=y$. The case $x\geq y$ is handled similarly by conditioning on $[X_{\infty}=l]$ using the $h$-function $1-s$.
\end{proof}
Note that if $s(l)=0=1-s(r)$, then $P^x(X_{\infty}=r)=s(r)$ and
\be \label{e:psiff}
\psi(x,y)= \left\{\ba{rl}
\frac{s(x)}{s(y)}, & y\geq x; \\
\frac{1-s(x)}{1-s(y)}, & y < x.
\ea \right . \qquad \qquad u(x,y)= s(x)(1-s(y)), \; x \leq y.
\ee
On the other hand, if $s(l)=0$ and $s(r)=\infty$, then $X_t \rar l$, $P^x$-a.s. for any $x \in (l,r)$, which in turn implies
\be \label{e:psifi}
\psi(x,y)= \left\{\ba{rl}
\frac{s(x)}{s(y)}, & y\geq x; \\
1, & y < x.
\ea \right . \qquad \qquad u(x,y)= s(x), \; x \leq y.
\ee
Similarly, if $s(l)=-\infty$ and $s(r)=1$, then $X_t \rar r$, $P^x$-a.s. for any $x \in (l,r)$, and 
\be \label{e:psiif}
\psi(x,y)= \left\{\ba{rl}
1, & y\geq x; \\
\frac{1-s(x)}{1-s(y)}, & y < x.
\ea \right . \qquad \qquad u(x,y)= 1-s(y), \; x \leq y.
\ee

For later purposes we also define 
\be \label{e:Pexitr}
\rho(y):=P^y(X_{\infty}=r).
\ee
We are interested in conditioning $X$ so that $L^y_{\infty}=a$ for some given $a>0$. As any such conditioning will make sure that $X$ first hits $y$, we will assume $X_0=y$ to ease the exposition. Formally, the construction of the conditioned process should be achieved in two steps: 1) make sure that $X$ keeps hitting $y$ before $L^y$ reaches $a$ and 2) as soon as $L^y$ becomes $a$  never let $X$ hit $y$ again. 

In order to achieve the first step we need to change the behaviour of $X$ in such a way that the process is {\em recurrent}. Indeed, if $X$ is still transient after some transformation, there will be a positive probably that it will drift towards one of its endpoints before $L^y$ becomes $a$.  Section \ref{s:rectr} uses  a particular { recurrent transformation}  to complete the first step of our conditioning. 

The second step in our recipe is to prevent $X$ from hitting $y$ after $\tau^y_{a-}$.  Since $X_{\tau^y_{a-}}=y$, on $[\tau^y_{a-}<\infty]$, this means that we need to keep $X$ above or below $y$ after $\tau^y_{a-}$.  Recall that we are not merely interested in creating a process with the property that $L^y_{\infty}=a$,  but a conditioned version of $X$ whose law coincides with the regular conditional probability $P^y(\cdot | L^y_{\infty}=a)$. This necessitates, in particular, that the conditioned process should also have the same set of possible values for its limiting value.  If $s(l)=0$ and $s(r)=\infty$ (resp.  $s(l)=-\infty$ and $s(r)=1$),  our task is relatively simple: keep $X$ below (resp. above) $y$ at all times after $\tau^y_{a-}$. 

On the other hand, if $s(l)=0=1-s(r)$, the original process could drift towards $r$ as well as $l$.  As we are only conditioning on $L^y_{\infty}$ and not on $X_{\infty}$, we will have to appropriately {\em randomise} the coefficients of the SDE for the bridge process to allow our solution have $l$ and $r$ as possible limit points. In Section \ref{s:excursion} we study the {\em Bessel-type} SDEs with appropriate  drifts that will allow us  to complete the second step, discuss its connection with the excursions of $X$, and present a time reversal connection. Finally, the SDE that achieves our desired conditioning with randomised drifts is constructed in Section \ref{s:main}. 

\section{A recurrent transformations and Step 1} \label{s:rectr}
The first step towards our desired conditioning requires us transform $X$ into a recurrent diffusion. To do so we shall resort to a recurrent transform as developed in \cite{Crt}.  In order to recall the concept suppose that $h$ is a non-negative $C^2$-function and $M$ an adapted continuous process of finite variation so that $h(X)M$ is a non-negative local martingale. Thus, by stopping at its localising sequence and using Girsanov's theorem we arrive at a weak solution, up to a stopping time, of the following equation for any given $x \in (l,r)$ taking values in $(l,r)$:
\be \label{e:sderectr}
X_t=x+ \int_0^t \sigma(X_s)dB_s +\int_0^t \left\{b(X_s) + \sigma^2(X_s)\frac{h'(X_s)}{h(X_s)}\right\}ds.
\ee
We can associate to this SDE the scale function 
\be \label{e:recscale}
r_s(x):=\int_c^x \frac{s'(y)}{h^2(y)}dy, \qquad x \in (l,r)
\ee
provided that the integral is finite for all $x \in (l,r)$, which, in particular, requires $h>0$ on $(l,r)$. This will allow us to deduce the existence of a solution to (\ref{e:sderectr}) until the first exit time from $(l,r)$. If, in addition, $-r_s(l+)=r_s(r-)=\infty$, the solution will be recurrent and never exit $(l,r)$ (see Proposition 5.5.22 in \cite{KS}). 

\begin{remark} It has to be noted that the notion of recurrence that we consider for one-dimensional diffusions excludes some recurrent solutions of one-dimensional SDEs with time-homogeneous coefficients since we kill our diffusion as soon as it reaches a boundary point. A notable example is a squared Bessel process with  dimension $\delta <2$, which solves the following SDE:
\[
X_t= x+2 \int_0^t\sqrt{X_s}dB_s +\delta t.
\]
The above SDE has a global strong solution, i.e. solution for all $t \geq 0$, which is recurrent (see Section XI.1 of \cite{RY}). However, the point $0$ is reached a.s. and is instantaneously reflecting by Proposition XI.1.5 in  \cite{RY}. As such, it violates our assumption of a diffusion being killed at a regular boundary. According to  this assumption, a squared Bessel process of dimension $0<\delta<2$ is  killed as soon as it reaches $0$ and, thus, is a {\em transient} diffusion. 
\end{remark}

The following definition is borrowed from \cite{Crt}:
\begin{definition} Let $X$ be a regular transient diffusion satisfying Assumption \ref{a:X} and  $h:(l,r)\mapsto (0,\infty)$ be a continuous function such that the limits $h(l+):=\lim_{x \rar l}h(x)$ and $h(r-):=\lim_{x \rar r}h(x)$ exists. Then, $(h,M)$ is said to be a recurrent transform (of $X$) if the following are satisfied:
\begin{enumerate}
\item $M$ is an adapted process of finite variation.
\item $h(X)M$ is a nonnegative local martingale. 
\item The function $r_s$ from (\ref{e:recscale}) is finite for all $x \in (l,r)$ with $-r_s(l+)=r_s(r-)=\infty$.
\item There exists a unique weak solution to (\ref{e:sderectr}) for $t \geq 0$ for any $x \in (l,r)$.
\end{enumerate}
\end{definition}

The following proposition, which  will let us achieve the first step of our desired conditioning, is proved in \cite{Crt}. However, we reproduce the statement and its proof for the convenience of the reader.
\begin{proposition} \label{p:step1} Let $y\in (l,r)$ be fixed as in the previous section and consider the pair $(h,M)$ defined by
\[
h(x):=u(x,y), \; x \in (l,r),\mbox{ and } M_t=\exp\left(\frac{s'(y)L^y_t}{2u(y,y)}\right).
\]
Then, the following hold:
\begin{enumerate}
\item There exists a unique weak solution to 
\be \label{e:step1}
X_t=x+\int_0^t\sigma(X_s)dB_s + \int_0^t \left\{b(X_s)+\sigma^2(X_s)\frac{u_x(X_s, y)}{u(X_s, y)}\right\} ds, \qquad t \geq 0,
\ee
for any $x \in (l,r)$, where  $u_x$ denotes the first partial left derivative of $u(x,y)$ with respect to $x$. 
\item $(h,M)$ is a recurrent transform for $X$.
\item Moreover, If $R^{h,x}$ denotes the law of the solution, then, for all $a>0$, we have  $R^{h,x}(L^y_{\infty} \geq a)=R^{h,x}(\tau^y_{a-} <\infty)=1$ and
\be \label{e:ACstep1}
\frac{dR^{h,x}}{dP^x}\big|_{\cF_{\tau^y_{a-}}}=\frac{u(y,y)}{u(x,y)}\exp\left(\frac{as'(y)}{2u(y,y)}\right)\chf_{[\tau^y_{a-}<\zeta]}.
\ee
In general if $T$ is a stopping time such that $R^{h,x}(T<\infty)=1$, then for any $ F \in \cF_T$ the following identity holds:
\be \label{e:survival}
P^x(\zeta>T, F)= u(x,y)E^{h,x}\left[\chf_F \frac{1}{u(X_T,y)}\exp\left(-\frac{s'(y)}{2u(y,y)}L^y_T\right)\right],
\ee
where $E^{h,x}$ is the expectation operator with respect to the probability measure $R^{h,x}$.
\end{enumerate}
\end{proposition}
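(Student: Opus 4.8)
The plan is to realise $R^{h,x}$ as a locally absolutely continuous (Girsanov) transform of $P^x$ with density process $Z_t:=\frac{u(X_t,y)}{u(x,y)}\exp\left(\frac{s'(y)}{2u(y,y)}L^y_t\right)$, to read off the transformed SDE and its recurrence from the associated scale function, and finally to obtain the two identities in part (3) by optional stopping and inversion of $Z$. The heart of the argument is the verification that $h(X)M=u(X,y)\exp(\cdot)$ is a local martingale, i.e. that $(h,M)$ satisfies condition (2) in the definition of a recurrent transform; everything else is then bookkeeping around this fact.

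For this I would apply the It\^o--Tanaka formula to the function $x\mapsto u(x,y)$. Away from $y$ this function is $\cL$-harmonic, i.e. $\half\sigma^2 u_{xx}+b\,u_x=0$, because $u(\cdot,y)$ is (up to constants) a product of an increasing and a decreasing solution of $\cL\phi=0$; at $y$ it has a concave kink whose slope drop is $u_x(y+,y)-u_x(y-,y)=-s'(y)$, as one checks from the explicit expressions (\ref{e:psiff})--(\ref{e:psiif}) in all three boundary cases. Hence It\^o--Tanaka gives
\[
u(X_t,y)=u(X_0,y)+\int_0^t u_x(X_s,y)\sigma(X_s)\,dB_s-\half s'(y)L^y_t ,
\]
the drift of the It\^o integral cancelling against the $\cL$-harmonic part. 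Since $M_t=\exp(cL^y_t)$ with $c=\frac{s'(y)}{2u(y,y)}$ is continuous and of finite variation and $L^y$ increases only on $\{X=y\}$, integration by parts yields
\[
d\big(u(X_t,y)M_t\big)=M_t u_x(X_t,y)\sigma(X_t)\,dB_t+\Big(c\,u(y,y)-\half s'(y)\Big)M_t\,dL^y_t ,
\]
and the choice of $c$ makes the $dL^y$ term vanish. This proves (2), and the normalisation $Z=u(X,y)M/u(x,y)$ is a nonnegative local martingale with $Z_0=1$.

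Next, the scale function attached to (\ref{e:step1}) is $r_s(x)=\int_c^x s'(z)\,u(z,y)^{-2}\,dz$. Using the explicit forms of $u$ one finds $\int_{l}^{\cdot}s'(z)s(z)^{-2}\,dz=\infty$ and $\int_{\cdot}^{r}s'(z)(1-s(z))^{-2}\,dz=\infty$, with the obvious modifications in the one-sided cases, so $-r_s(l+)=r_s(r-)=\infty$ while $r_s$ is finite on $(l,r)$; this is condition (3). Girsanov, applied up to the localising times of $Z$, then produces a weak solution of (\ref{e:step1}) up to the exit time from $(l,r)$, and the boundary behaviour of $r_s$ shows this exit time is a.s.\ infinite, giving a global solution and proving (1). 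Weak uniqueness follows from the one-to-one correspondence of laws under the locally absolutely continuous change of measure, uniqueness for $X$ being granted by Assumption \ref{a:X}; recurrence moreover forces $L^y_\infty=\infty$, whence $R^{h,x}(\tau^y_{a-}<\infty)=R^{h,x}(L^y_\infty\geq a)=1$ for every $a>0$.

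Finally, for (3) I would evaluate $Z$ at $T=\tau^y_{a-}$. On $[\tau^y_{a-}<\zeta]$ one has $X_{\tau^y_{a-}}=y$ and $L^y_{\tau^y_{a-}}=a$, so $Z_{\tau^y_{a-}}=\frac{u(y,y)}{u(x,y)}\exp\big(\frac{as'(y)}{2u(y,y)}\big)$ is deterministic; optional stopping together with $R^{h,x}(\tau^y_{a-}<\infty)=1$ identifies $dR^{h,x}/dP^x$ on $\cF_{\tau^y_{a-}}$ and yields (\ref{e:ACstep1}). For (\ref{e:survival}), since $R^{h,x}=Z_T\cdot P^x$ on $\cF_T\cap[\zeta>T]$ and $Z_T>0$ there, inverting the density gives $P^x(\zeta>T,F)=E^{h,x}[\chf_F Z_T^{-1}]$ for $F\in\cF_T$, and substituting $Z_T^{-1}=\frac{u(x,y)}{u(X_T,y)}\exp(-\frac{s'(y)}{2u(y,y)}L^y_T)$ produces the stated formula. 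The main obstacle throughout is the passage from the local-martingale property of $Z$ to a genuine probability measure $R^{h,x}$: because the original $X$ is transient, $Z$ is in general only a strict local martingale under $P^x$, and it is precisely the recurrence established through $r_s$ that guarantees no mass is lost under the change of measure, so that $\tau^y_{a-}$ and $T$ are $R^{h,x}$-a.s.\ finite and the optional-stopping and inversion steps are legitimate.
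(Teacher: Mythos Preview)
Your argument is correct and follows essentially the same route as the paper: It\^o--Tanaka to show $u(X_{\cdot},y)M$ is a local martingale, the computation of $r_s$ to obtain recurrence, and Girsanov/optional stopping for the identities in part (3). Two points of comparison are worth noting.

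For weak uniqueness in part (1) you invoke the one-to-one correspondence of laws under the local change of measure. The paper instead verifies directly that the new drift $b+\sigma^2 u_x(\cdot,y)/u(\cdot,y)$ satisfies the Engelbert--Schmidt local integrability condition (\ref{e:ESR}), computing $\int_{x-\eps}^{x+\eps}\frac{|u_x(z,y)|}{u(z,y)}\,dz$ explicitly in the three cases $x<y$, $x>y$, $x=y$, and then appeals to Theorem 5.5.15 in \cite{KS}. Both are valid; the paper's route is slightly more self-contained.

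The more substantive difference is in the derivation of (\ref{e:survival}). You assert directly that $R^{h,x}=Z_T\cdot P^x$ on $\cF_T\cap[\zeta>T]$ for an arbitrary $R^{h,x}$-a.s.\ finite stopping time $T$ and then invert. This is true, but it is exactly the delicate point you flag at the end: since $Z$ is in general a strict local martingale under $P^x$, this identity requires a localisation or F\"ollmer-measure argument that you do not spell out. The paper avoids this by a bootstrapping trick: it first establishes the absolute continuity only on $\cF_{\tau^y_{a-}}$, where the stopped density $Z^{\tau^y_{a-}}$ is \emph{bounded} (hence genuinely uniformly integrable), then for general $T$ writes
\[
E^{h,x}\Big[\chf_F\chf_{[L^y_T\leq a]}\,Z_T^{-1}\Big]
=\frac{1}{u(x,y)}\,E^x\big[\chf_F\chf_{[L^y_T\leq a]}\chf_{[T<\zeta]}\big]
\]
using that $[L^y_T\leq a]\subset[\,T\leq\tau^y_{a-}\,]$ lies in $\cF_{\tau^y_{a-}}$, and finally sends $a\to\infty$ by monotone convergence (both $L^y_T$ are a.s.\ finite under the respective measures). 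Your direct inversion is cleaner once the underlying measure-theoretic fact is granted; the paper's truncation argument is more elementary and fully justified within the proof.
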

\begin{proof}
\begin{enumerate}
\item It follows from  (\ref{e:psiff})-(\ref{e:psiif}) that $x \mapsto u(x,y)$ is stricly positive,absolutely continuous and its left derivative is of finite variation. In fact, for each $y \in (l,r)$ $x \to u(x,y)$ is differentiable at all $x \neq y$. This implies that the Engelbert-Schmidt conditions are satisfied for  the SDE in (\ref{e:step1}). To see this recall that  $\sigma$ and $b$  satisfy (\ref{e:ESR}) by assumption. Thus, one only needs to check, for any $x\in (l,r)$, there exists an $\eps>0$ such that
\be \label{e:escRT}
\int_{x-\eps}^{x+\eps}\frac{|u_x(z,y)|}{u(x,z)}dz<\infty.
\ee
Indeed, if $x <y$, $u_x(x,y)>0$ and for any $\eps>0$ such that $x+\eps<y$, one has
\[
\int_{x-\eps}^{x+\eps}\frac{|u_x(z,y)|}{u(x,z)}dz=\int_{x-\eps}^{x+\eps}\frac{u_x(z,y)}{u(x,z)}dz=\log\frac{u(x+\eps,y)}{u(x-\eps,y)}<\infty.
\]
Similarly, one can show that (\ref{e:escRT}) is satisfied for some $\eps>0$ when $x>y$ since $u(\cdot,y)$ is decreasing on $(y,r)$. Moreover,
\[
\int_{y-\eps}^{y+\eps}\frac{|u_x(z,y)|}{u(x,z)}dz=\log\frac{u(y,y)}{u(y-\eps,y)}+\log\frac{u(y,y)}{u(y+\eps,y)} <\infty.
\]
 Thus, there exists a unique weak solution to (\ref{e:step1}) by Theorem 5.5.15 in \cite{KS} upto the exit time from the interval $(l,r)$. 
 
 We shall next show  that  solutions of (\ref{e:step1}) never hit $l$ or $r$ in finite time, implying in particular their recurrent behaviour. This  will follow if the scale function of (\ref{e:step1}) is unbounded near $l$ and $r$.

To this end consider  the function
\[
r_s(x)=\int_y^x \frac{s'(z)}{u^2(z,y)}dz,
\]
and suppose, first, that $s(l)=1-s(r)=0$. Then, for $x <y$,
\[
r_s(x)=\frac{1}{(1-s(y))^2}\int_y^x\frac{s'(z)}{s^2(z)}dz=\frac{1}{(1-s(y))^2}\left(\frac{1}{s(y)}-\frac{1}{s(x)}\right),
\]
which in particular shows that $\lim_{x \rar l}r_s(x)=-\infty$. Similarly, for $x >y$,
\[
r_s(x)=\frac{1}{s^2(y))}\int_y^x\frac{s'(z)}{(1-s(z))^2}dz=\frac{1}{s^2(y)}\left(\frac{1}{1-s(x)}-\frac{1}{1-s(y)}\right),
\]
and, thus, $s_r(\infty)=\infty$. The other cases are handled  the same way to show $-r_s(l)=r_s(r)=\infty$. 
\item It follows from a simple application of Ito-Tanaka formula that $h(X)M$ is a nonnegative local martingale. Thus, computations of the previous part yields $(h, M)$ is  a recurrent transformation.
\item First note that if $F \in \cF_T$ for some $(\cF_t)$-stopping time, $T$, such that $h(X^T)M^T$ is a uniformly integrable $P^y$-martingale,  
 \be \label{e:ACrectr}
 R^{h,x}(F)=\frac{1}{h(x)}E^{x}\left[\chf_{F}h(X_T)M_T\right].
 \ee
Since $h(X^{\tau^y_{a-}})M^{\tau^y_{a-}}$ is a bounded martingale,  it follows from (\ref{e:ACrectr}) that 
\bean
R^{h,x}(\tau^y_{a-}<\zeta)&=&\frac{1}{u(x,y)}E^x\left[\chf_{[\tau^y_{a-}<\zeta]}u(X_{ \tau^y_{a-}},y)\exp\left(\frac{s'(y)L^y_{\tau^y_{a-}}}{2u(y,y)}\right)\right]\\
&=&\frac{u(y,y)}{u(x,y)}\exp\left(\frac{a s'(y)}{2 u(y,y)}\right)P^x(\tau^y_{a-}<\zeta),
\eean
where the last equality is due to the fact that $X_{\tau^y_{a-}}=y$ and $L^y_{\tau^y_{a-}}=a$ on $[\tau^y_{a-}<\zeta]$ by the continuity of $X$ and of $L^y$. On the other hand,
\[
P^x(\tau^y_{a-}<\zeta)=P^x(L^y_{\infty}\geq a)=\psi(x,y) \exp\left(-\frac{as'(y)}{2u(y,y)}\right)
\]
in view of (\ref{e:L_dist}). Since $\psi(x,y)=\frac{u(x,y)}{u(y,y)}$ and, $R^{h,y}(\zeta=\infty)=1$, we deduce $R^{h,y}(\tau^y_{a-}<\infty)=R^{h,y}(L^y_{\infty}\geq a)=1$. Applying (\ref{e:ACrectr}) once more yields the desired absolute continuity on $\cF_{\tau^y_{a-}}$.

To show the remaining assertion let $T$ be $R^{h,x}$-a.s. finite stopping time. Then, 
\bean
&&E^{h,x}\left[\chf_F\chf_{[L^y_T\leq a]} \frac{1}{u(X_T,y)}\exp\left(-\frac{s'(y)}{2u(y,y)}L^y_T\right)\right]\\
&=&E^x\left[\chf_F\chf_{[L^y_T\leq a]} \frac{1}{u(X_T,y)}\exp\left(-\frac{s'(y)}{2u(y,y)}L^y_T\right)\frac{u(y,y)}{u(x,y)}\exp\left(\frac{a s'(y)}{2 u(y,y)}\right)\chf_{[\tau^y_{a-}<\zeta]}\right]\\
&=&E^x\left[\chf_F\chf_{[L^y_T\leq a]} \frac{1}{u(X_T,y)}\exp\left(-\frac{s'(y)}{2u(y,y)}L^y_T\right)\frac{u(X_{\tau^y_{a-}},y)}{u(x,y)}\exp\left(\frac{s'(y)}{2 u(y,y)}L^y_{\tau^y_{a-}}\right)\chf_{[\tau^y_{a-}<\zeta]}\right]\\
&=&\frac{1}{u(x,y)}E^x\left[\chf_F\chf_{[L^y_T\leq a]} \chf_{[T<\zeta]}\right],
\eean
where the first equality is a consequence of the above absolute continuity relationship, the second equality is due to the fact that $X_{\tau^y_{a-}}=y$ on $[\tau^y_{a-}<\zeta]$, and the last line follows from $h(X^{\tau^y_{a-}})M^{\tau^y_{a-}}$ being a bounded $P^x$-martingale as observed above, 

Next note that $L^y_T<\infty$, $R^{h,x}$-a.s. since $R^{h,x}(T<\infty)=1$, and $t \mapsto L^y_t$ is $R^{h,x}$-a.s. continuous on $(0,\infty)$. Moreover, $P^x(L^y_T <\infty)=1$ as well since $L^y_T \leq L^y_{\zeta}<\infty, \, P^x$-a.s. for $X$ is a transient diffusion under $P^x$. Therefore, letting $a \rar \infty$ the claim follows from the monotone convergence theorem. 
\end{enumerate}
\end{proof}

Proposition \ref{p:step1} tells us what to do in our first step: We run the $(h,M)$-recurrent transformation given in the proposition until $\tau^y_{a-}$, which is finite with probability 1. That is,
\[
X_{t\wedge \tau^y_{a-}}=y+\int_0^{t\wedge\tau^y_{a-}}\sigma(X_s)dB_s + \int_0^{t\wedge\tau^y_{a-}} \left\{b(X_s)+\sigma^2(X_s)\frac{u_x(X_s, y)}{u(X_s, y)}\right\} ds.
\]
Recall that $[\tau^y_{a-}<\infty]=[L^y_{\infty}\geq a]$. Thus, the above makes sure that the conditioned process will have its local time at $y$ being at least equal to $a$ in the limit. It now remains to make sure that the process never visits $y$ after $\tau^y_{a-}$
\section{Bessel-type motions and Step 2} \label{s:excursion}
Recall that the second step of our recipe is to keep $X$ away from $y$ after $\tau^y_{a-}$. As one can guess this can be achieved by conditioning $X$ to never hit $y$ using an $h$-transform.  The next proposition make this idea rigorous and  gives us the candidate drifts that ensure $X$ has the prescribed limit while avoiding $y$ at the same time.

\begin{proposition} \label{p:step2} Let $X$ be a regular diffusion satisfying Assumption \ref{a:X}. 
\begin{enumerate} 
\item Suppose $s(l)=0$. There exists a regular diffusion on $(l,y)$ with the scale function $s_l$ and the speed measure, $m_l$, defined by
\[
s_l(x):=\frac{1}{s(y)-s(x)}, \qquad m_l(dx):=(s(y)-s(x))^2 m(dx). 
\]
$y$ is an entrance boundary for this diffusion, which is also the unique weak solution to 
\be \label{e:sdel}
R_t= x+ \int_0^t\sigma(R_s)dB_s +\int_0^t \left\{b(R_s) - \frac{s'(R_s)\sigma^2(R_s)}{s(y)-s(R_s)}\right\}ds, \qquad t<\zeta, 
\ee
where $x\in (l,y]$ and $\zeta=\inf\{t\geq 0: R_{t-}=l\}$. Moreover, $\lim_{t \rar \infty}R_t=l,\, Q^{x,0}$-a.s., where $Q^{x,0}$ is the law of the weak solution to the SDE above.
\item Suppose $s(r)=1$. There exists a regular diffusion on $(y,r)$ with the scale function $s_r$ and the speed measure, $m_r$, defined by
\[
s_r(x):=\frac{1}{s(y)-s(x)}, \qquad m_r(dx):=(s(y)-s(x))^2 m(dx). 
\]
$y$ is an entrance boundary for this diffusion, which is also the unique weak solution to 
\be \label{e:sder}
R_t= x+ \int_0^t\sigma(R_s)dB_s +\int_0^t \left\{b(R_s) +\frac{s'(R_s)\sigma^2(R_s)}{s(R_s)-s(y)}\right\}ds, \qquad t<\zeta, 
\ee
where $x\in [y,r)$ and $\zeta=\inf\{t\geq 0: R_{t-}=r\}$. Moreover, $\lim_{t \rar \infty}R_t=r,\, Q^{x,1}$-a.s., where $Q^{x,1}$ is the law of the weak solution to the SDE above.
\end{enumerate}
\end{proposition}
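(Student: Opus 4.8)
The plan is to recognise the SDE (\ref{e:sdel}) as the Doob $h$-transform of $X$ — killed upon leaving $(l,y)$ — with respect to the harmonic function $h(x):=s(y)-s(x)$, and then to read off every stated property from the scale/speed description of that transform. Since $h$ is an affine function of the scale $s$, it is harmonic for $X$ on $(l,y)$, and it is strictly positive there because $s$ is strictly increasing with $s(x)<s(y)$ for $x<y$. The effect of the $h$-transform on the generator is to leave $\sigma$ unchanged and to add $\sigma^2 h'/h$ to the drift; as $h'=-s'$, the new drift is exactly $b-s'\sigma^2/(s(y)-s)$, which is the drift in (\ref{e:sdel}). I would therefore first apply the formulas (\ref{e:scalespeed}) to the coefficients $\sigma$ and $b-s'\sigma^2/(s(y)-s)$ and verify, by the short computations $\int s'(x)/(s(y)-s(x))\,dx=-\log(s(y)-s(x))$ and $2/(s_l'\sigma^2)=(s(y)-s)^2\cdot 2/(s'\sigma^2)$, that the associated scale function is $s_l(x)=1/(s(y)-s(x))$ (up to an affine normalisation) and the associated speed measure is $m_l(dx)=(s(y)-s(x))^2 m(dx)$. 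This identifies the candidate diffusion and shows it is regular on $(l,y)$, as $s_l$ is strictly increasing and continuous and $m_l$ is locally finite and strictly positive there.

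Next I would classify the endpoint $y$. Since $s_l(x)\to+\infty$ as $x\uparrow y$, the scale integral $\int^y(s_l(y)-s_l(x))\,m_l(dx)$ diverges, so $y$ is inaccessible from the interior — precisely the property we want for Step 2, namely that the transformed process never returns to $y$. To upgrade ``inaccessible'' to ``entrance'' I must check that the complementary Feller integral $N_y=\int^y m_l((x,y))\,ds_l(x)$ is finite. A Fubini interchange collapses $N_y$ to $\int^y\bigl[(s(y)-s(z))-(s(y)-s(z))^2/(s(y)-s(c))\bigr]m(dz)$, which is finite because $y$ is an interior point of the original state space: $m$ puts finite mass near $y$ and the integrand is bounded there. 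Hence $y$ is an entrance boundary. Symmetrically, the left endpoint has finite scale $s_l(l+)=1/s(y)$, so the transformed diffusion is transient with $l$ its only accessible endpoint; it follows that $\lim_{t\to\infty}R_t=l$ a.s.\ and that $\zeta=\inf\{t:R_{t-}=l\}$, since $y$ can never be reached.

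For existence and uniqueness of the weak solution I would split according to the starting point. For $x\in(l,y)$ I would verify that the new coefficients still satisfy (\ref{e:ESR}) on $(l,y)$: the only new term is $s'/(s(y)-s)$, which is locally bounded away from $y$ because $s'$ is continuous and $s(y)-s(\cdot)$ is bounded below on compact subintervals, so the classical Engelbert–Schmidt theory gives a unique weak solution up to the exit time, which by inaccessibility of $y$ is $\inf\{t:R_{t-}=l\}$. The genuinely delicate case is $x=y$: here (\ref{e:ESR}) fails as $x\uparrow y$ — precisely because $s'/(s(y)-s)$ blows up — so the classical theory does not apply and I would instead invoke Theorem \ref{t:SDEent}, whose hypotheses hold since the transformed process is a regular transient diffusion with scale and speed given by (\ref{e:scalespeed}) for the new coefficients and with $y$ as its entrance boundary. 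Part 2 is entirely symmetric, using $h(x)=s(x)-s(y)$ on $(y,r)$. I expect the main obstacle to be the clean treatment of the entrance starting point: both establishing that $y$ is entrance through the Feller integrals and then importing Theorem \ref{t:SDEent} to obtain a genuine unique weak solution started at $y$, rather than merely a diffusion defined abstractly through its scale and speed.
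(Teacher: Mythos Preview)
Your proposal is correct and follows essentially the same route as the paper: identify the scale $s_l$ and speed $m_l$, verify $y$ is entrance via the two Feller integrals, check the Engelbert--Schmidt condition on $(l,y)$ for interior starting points, invoke Theorem~\ref{t:SDEent} for the entrance start $x=y$, and read off the a.s.\ limit from $s_l(l+)<\infty=s_l(y-)$. Your $h$-transform framing and Fubini computation of $N_y$ are cosmetic repackagings of the paper's direct scale/speed check and limit argument for the same integral, but the logical skeleton is identical.
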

\begin{proof}
We will only prove the proposition in case (1), the proof of the second part follows identical steps. 

Clearly, $s_l$ is strictly increasing and continuous since $s'>0$ on $(l,y)$. Moreover, it can be directly checked that $0<m_l((a,z))<\infty$ for $l<a<z<y$ using the analogous property for $m$ and the fact that $s$ is strictly increasing. Thus, we can associate a regular diffusion to $(s_l,m_l)$.  To see that $y$ is an entrance boundary, observe that for $l<z<y$
\[
\int_z^y (s_l(a)-s_l(z))(s(y)-s(a))^2m(da)<\infty.
\]
Indeed, since $s_l(y-)=\infty$,
\[
\lim_{a \rar y} \frac{s_l(a)-s_l(z)}{(s(y)-s(a))^{-2}}=\half \lim_{a \rar y} \frac{\frac{s'(a)}{(s(y)-s(a))^2}}{s'(a)(s(y)-s(a))^{-3}}=0,
\]
implying the finiteness of the above integral near $y$ due to the fact that $y$ belongs to the interior of the state space of the regular diffusion with scale $s$ and speed $m$. Finiteness of the integral near $z$ follows from the boundedness of $s_l$ and $s$ on the compact subsets of $(l,y)$ and the finiteness of $m$ in the interior of $(l,r)$. 

In order to conclude $y$ is an entrance boundary, we also need to verify that 
\[
\int_z^y m_l((z,a))\frac{s'(a)}{(s(y)-s(a))^2}da=\infty.
\]
The above integral diverges since $m_l((z,y))>0$, and 
\[
\int_{z'}^y \frac{s'(a)}{(s(y)-s(a))^2}da=\infty
\]
for any $z'$ such that $z<z'<y$.

In order to show the weak existence and uniqueness of  solutions to (\ref{e:sdel}) with $x \in (l,y)$, we will again make use of the Engelbert-Schmidt criteria analogous to (\ref{e:ESR}). Observe that in view of our assumption on $\sigma$ and $b$, all that we need to check is the local integrability of $\frac{s'}{s(y)-s}$.

Indeed, for any $(x,z)$ such that $l<x<z<y$
\[
\int_x^z \frac{s'(u)}{s(y)-s(u)}du=\log\frac{s(y)-s(z)}{s(y)-s(x)}<\infty.
\]

The existence and uniqueness of a weak solution when the starting point is $y$, i.e. the entrance boundary, follows from Theorem \ref{t:SDEent}.

Finally, the limit as $t\rar \infty$ follows from the fact that $s_l(l)<\infty$ while $s_l(y)=\infty$ (see Proposition 5.5.22 in \cite{KS}).
\end{proof}

 We will next prove a result which shows that the processes obtained above can be considered as  analogues of $3$-dimensional Bessel process, which is the killed Brownian motion on $(0,\infty)$ conditioned to converge to $\infty$. As such, they define the entrance laws for the excursions of the original process, $X$, away from $y$ as shown by Pitman and Yor (see Section 3 of \cite{PY-DBB}) and can be considered as an excursion of $X$ away from $y$ conditioned to last forever. Moreover, a time reversal relationship exists between the solutions of (\ref{e:sdel}) and those of (\ref{e:sdeR}) stopped at $y$ akin to the one between the $3$-dimensional Bessel process and the killed Brownian motion established by Williams \cite{W73}.  This relationship will be proved by a time reversal result of Nagasawa \cite{Nagasawa}. The following version of this result is taken from Sharpe \cite{Sharpe}. 
\begin{theorem}[Nagasawa \cite{Nagasawa}] \label{t:timereversal} Let $X$ and $\hat{X}$ be standard Markov processes in duality on their common state space with respect to a $\sigma$-finite measure, $\xi$. Let $u(x,y)$ denote the potential kernel density with respect to $\xi$ so that for any positive and measurable $f$
\[
E^x\int_0^{\infty}f(X_t)dt= \int u(x,z) f(z)\xi(dz).
\]
Let $G$ be a co-optional time and define
\[
\tilde{X_t}=\left\{ \ba{ll} 
X_{(G-t)-}, & \mbox{ on } [G<\infty] \mbox{ if } 0<t < G; \\
\Delta, & \mbox{ otherwise}.
\ea
\right.
\]
Fix an initial law $\lambda$ and let $v(x)=\int u(x,z)\lambda(dz)$. Then, under $P^{\lambda}$, the process $\tilde{X}$ is a homogeneous Markov process with transition semigroup, $(\tilde{P}_t)$ defined by
\[
\tilde{P}_tf(x)= \left\{ \ba{ll} 
\frac{P_t fv(x)}{v(x)}, & \mbox{ if }  0<v(x) < \infty; \\
0, & \mbox{ otherwise}.
\ea
\right.
\]
\end{theorem}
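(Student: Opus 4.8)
The statement is a classical time-reversal theorem, so the plan is to follow the standard route: compute the finite-dimensional distributions of $\tilde{X}$ under $P^\lambda$ directly and read off a Markovian factorisation whose one-step kernel is the semigroup $\tilde{P}_t f = P_t(fv)/v$. The engine that makes the reversed process time-homogeneous is the \emph{co-optional} property of $G$, namely $G\circ\theta_s=(G-s)^+$ (equivalently $G\circ\theta_s=G-s$ on $\{s<G\}$). Fix reversed times $0<t_1<\cdots<t_n$ and bounded measurable $f_1,\ldots,f_n$; writing $\tilde{X}_{t_i}=X_{(G-t_i)-}$ and setting $s_i:=G-t_i$, so that $0<s_n<\cdots<s_1<G$, the object to evaluate is
\[
E^\lambda\Big[\prod_{i=1}^n f_i(X_{s_i-})\,;\,t_n<G<\infty\Big].
\]
First I would dispose of the left limits (they agree with $X_{s_i}$ off sets that do not affect the finite-dimensional law of a standard process, via its \emph{moderate} Markov property) and then split the trajectory at the ordered forward times $s_n<\cdots<s_1$.

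The two analytic inputs are duality and the potential density. Duality supplies the $\xi$-adjointness $\int (P_t f)\,g\,d\xi=\int f\,(\hat{P}_t g)\,d\xi$ together with the symmetry $u(x,z)=\hat{u}(z,x)$ of the potential densities. The plan is to apply the forward Markov property of $X$ successively at $s_n<\cdots<s_1$, which produces a chain of forward transition kernels read in increasing forward time, and then to use the adjointness repeatedly to transpose this chain into one read in increasing reversed time, i.e.\ in the order $t_1,\ldots,t_n$. This transposition is exactly the step that converts the forward evolution into the reversed semigroup appearing in the statement; the weight $v(x)=\int u(x,z)\lambda(dz)$ enters as the excessive function carried over from the $\lambda$-potential, supplying both the normalising denominators $1/v$ and the entrance law from which $\tilde{X}$ issues at reversed time $0^+$ (equivalently, as forward time approaches $G-$). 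Here the co-optional property is used once more to pin down the contribution of the last forward instant before $G$ in terms of $v$, so that the boundary factors assemble into the entrance law rather than into spurious terms.

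Assembling these pieces, the finite-dimensional expression should factor as
\[
\int \nu(dx_1)\, f_1(x_1)\, \tilde{P}_{t_2-t_1}\Big(f_2\,\tilde{P}_{t_3-t_2}(\cdots)\Big)(x_1),
\]
a nested product of the reversed one-step kernels $\tilde{P}_{t_i-t_{i-1}}$ integrated against an entrance measure $\nu$; matching this against the definition of a homogeneous Markov process with semigroup $\tilde{P}_t f=P_t(fv)/v$ proves the claim, the restriction to $\{0<v<\infty\}$ being precisely the set on which the $h$-transform normalisation is well defined. An alternative and in many ways cleaner route is to lift everything to the Kuznetsov measure associated with the dual pair $(X,\hat{X})$ and the excessive measure underlying $\xi$: under this stationary $\sigma$-finite measure the process and its reversal are simultaneously Markov, the identity for $\tilde{P}_t$ drops out of the built-in reversal symmetry, and one then specialises to the co-optional time $G$ and the law $P^\lambda$.

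The main obstacle I anticipate is not the algebra of the kernel manipulations but the measure-theoretic care needed to make the reversal rigorous: one must work with the left-limited process $X_{(\cdot)-}$ and invoke the moderate Markov property, control the exceptional null sets where $v$ vanishes or is infinite, and justify the splitting at the \emph{random, non-stopping} time $G$. Indeed the reversed ``past up to $t$'' corresponds to the forward process on $(G-t,G)$, so the required decomposition is a Markov property at the co-optional time $G-t$ rather than at a stopping time, and it is the homogeneity of co-optional times that legitimises it. Verifying finally that the consistent family of finite-dimensional distributions actually comes from a genuine right-continuous homogeneous Markov process is the delicate point that the cited results of Nagasawa and Sharpe dispose of, and which I would invoke rather than re-derive in full.
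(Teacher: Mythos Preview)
The paper does not prove this theorem: it is stated as a quoted result due to Nagasawa, in the version given by Sharpe, and is invoked as a black box in the subsequent proposition on Bessel-type motions. There is therefore no proof in the paper to compare your proposal against.

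That said, your sketch is the standard route to Nagasawa's theorem and is essentially correct in outline: compute the finite-dimensional distributions of the reversed process, use the co-optional identity $G\circ\theta_s=(G-s)^+$ to make the reversal time-homogeneous, and transpose the forward transition chain via duality to obtain the $h$-transformed semigroup $\tilde P_t f = P_t(fv)/v$. Your caveats about the moderate Markov property, the handling of left limits, and the non-stopping nature of $G-t$ are exactly the technical points that the original references address in detail. Since the paper treats this as a cited tool rather than something to be proved, the appropriate action here is simply to cite Nagasawa and Sharpe as the paper does, rather than to supply a proof.
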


Now, we can state and prove the results announced in the paragraph preceding the above theorem.
\begin{proposition} Let $X$ be a  regular diffusion satisfiying Assumption \ref{a:X}, $y \in (l,r)$, and denote by $X^0$ (resp. $X^1$) the killed diffusion process on $(l,y)$ (resp. $(y,r)$) with the scale function $s$ and the speed measure $m$.\footnote{These are simply the solutions of (\ref{e:sdeR}) killed when they reach $y$ or one of $l$ and $r$}
\begin{enumerate}
\item Suppose $s(l)=0$ (resp. $(s(r)=1$).  Then, for any bounded and measurable $f$ and $x \neq y$
\[
Q^0_tf(x)= \frac{P_t^0f(s(y)-s)(x)}{s(y)-s(x)}  \; \left(\mbox{resp. } Q^1_tf(x)= \frac{P_t^1f(s-s(y))(x)}{s(x)-s(y)} \right),
 \]
 where $(Q^0_t)_{t \geq 0}$ (resp. $(Q^1_t)_{t \geq 0}$) is the semigroup associated to  the solutions of (\ref{e:sdel}) (resp. (\ref{e:sder})) while  $(P^0_t)_{t \geq 0}$ (resp.$(P^1_t)_{t \geq 0}$) is the transition semigroup of $X^0$ (resp. $X^1$).
 \item Let $R$ be the solution of (\ref{e:sdel}) (resp. (\ref{e:sder})) with $x=y$. Pick a  $z \in (l,y)$ (resp. $z \in (y,r)$) and define the last passage time
 \[
 G_z:=\sup\{t:R_t=z\}.
 \]
 Next, let $Y$ be the diffusion on $(l,y)$ (resp. $(y,r)$)  obtained by conditioning $X^0$ (resp. $X^1$) converge to $y$ with $Y_0=z$. Then,  the processes 
 \[
 \left\{R_{G_z -t}, 0<t<G_y\right\} \mbox{ and } \left\{Y_t, 0<t<S_y\right\}
 \] have the same law, where 
 \[
 S_y=\inf\{t:Y_t=y\}.
 \]
 In particular, $G_z$ and $S_y$ are finite and have the same distribution.
\end{enumerate}
\end{proposition}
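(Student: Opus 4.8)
\emph{Part (1).} The plan is to recognise $R$ as a Doob $h$-transform of the killed process $X^0$. I would set $h:=s(y)-s$ on $(l,y)$; being affine in the scale $s$ it is $\cL$-harmonic, and it is strictly positive on $(l,y)$ with $h(y)=0$. First I would invoke the standard transformation rules for one-dimensional diffusions under an $h$-transform (the same ones used in the proof of Proposition \ref{p:LTRindep}): the transformed process is again a regular diffusion whose scale is the function $r_s$ of (\ref{e:recscale}), namely $\int_c^x s'(u)/h^2(u)\,du$, and whose speed is $h^2 m$. A direct antiderivative gives $\int_c^x s'(u)/(s(y)-s(u))^2\,du=(s(y)-s(x))^{-1}$ up to an additive constant, so the $h$-transform has scale $s_l$ and speed $m_l=(s(y)-s)^2 m$. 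By Proposition \ref{p:step2} these are exactly the scale and speed of $R$, and since a regular diffusion is determined in law by its scale and speed the two coincide. As the $h$-transform semigroup is by definition $P^0_t(fh)(x)/h(x)$, this is the asserted formula; the case $s(r)=1$ is identical with $h=s-s(y)$.

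\emph{Part (2).} Here the plan is to apply Nagasawa's reversal theorem (Theorem \ref{t:timereversal}) to $R$ itself. A regular one-dimensional diffusion is in duality with itself with respect to its speed measure, so I would take $X=\hat X=R$, $\xi=m_l$, and let $u_R$ be the (symmetric) potential density of $R$ with respect to $m_l$. The time $G_z$ is a last-exit time, hence co-optional, and since $R$ is transient with $R_t\rar l$ it must cross $z$, so $G_z<\infty$ a.s.; the initial law is $\lambda=\delta_y$ because $R_0=y$. With $v(x):=u_R(x,y)$, Theorem \ref{t:timereversal} then gives that the reversed process $\tilde R_t:=R_{(G_z-t)-}$ is Markov with semigroup $Q^0_t(fv)(x)/v(x)$, starts at $\tilde R_0=R_{G_z}=z$, and is absorbed at $y=R_0$ at reversed time $G_z$.

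The decisive computation is the identification of $v$. Using the $h$-transform relation between potential densities (again as in Proposition \ref{p:LTRindep}), $u_R(x,z)=u^0(x,z)/\big((s(y)-s(x))(s(y)-s(z))\big)$, where $u^0(x,z)=s(x\wedge z)(s(y)-s(x\vee z))/s(y)$ is the Green function of $X^0$ on $(l,y)$ with both endpoints absorbing (the analogue of (\ref{e:psiff})). Letting $z\rar y$ the factor $s(y)-s(z)$ cancels and I obtain $v(x)=s(x)\big/\big(s(y)(s(y)-s(x))\big)$, whence the key identity $v(x)(s(y)-s(x))=s(x)/s(y)$. Substituting the Part (1) formula into the reversed semigroup and using this identity collapses the double transform into a single one:
\[
\frac{Q^0_t(fv)(x)}{v(x)}=\frac{P^0_t\big(f\,v\,(s(y)-s)\big)(x)}{v(x)\,(s(y)-s(x))}=\frac{P^0_t(fs)(x)}{s(x)},
\]
which is precisely the semigroup of $Y$, the $h$-transform of $X^0$ by the scale function $s$ (proportional to the probability that $X^0$ exits at $y$) that conditions it to converge to $y$. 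Since $\tilde R$ and $Y$ share this semigroup and both start at $z$, the reversed path and $\{Y_t\}$ agree in law; as $\tilde R$ is absorbed at $y$ at time $G_z$ while $Y$ is absorbed at $y$ at $S_y$, the running times satisfy $G_z\stackrel{d}{=}S_y$, both finite. The case $s(r)=1$ follows by the symmetric argument.

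The step I expect to be the main obstacle is the rigorous application of Nagasawa's theorem across the entrance boundary $y$: one must check that $R$ started at the entrance point fits the standard-Markov-process framework of Theorem \ref{t:timereversal}, that $v(x)=u_R(x,y)$ obtained as a boundary limit is the correct finite, strictly positive potential to feed into the theorem, and that the reversed process is genuinely killed exactly upon reaching $y$ so that its lifetime matches $S_y$. Once the potential density is correctly computed, the remaining work is only the short identity $v\,(s(y)-s)\propto s$, which is precisely what makes the reversal of the downward conditioning reproduce the upward conditioning.
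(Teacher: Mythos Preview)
Your proof is correct and follows essentially the same route as the paper: Part (1) identifies $R$ as the $h$-transform of $X^0$ by $h=s(y)-s$ (the paper does this via Girsanov and weak uniqueness rather than via scale/speed identification, but the content is the same), and Part (2) applies Nagasawa's theorem with $v(x)=u_R(x,y)$ and then uses Part (1) to collapse the semigroup to $P^0_t(fs)(x)/s(x)$. The only cosmetic difference is that the paper computes $u_R(x,y)=s_l(x)-s_l(l)$ directly from the one-sided potential formula for a diffusion with $s_l(y)=\infty$, whereas you obtain the same expression via the $h$-transform relation $u_R=u^0/(h\otimes h)$; both yield $v(x)=s(x)/\big(s(y)(s(y)-s(x))\big)$ and the identity $v\,(s(y)-s)=s/s(y)$ that finishes the argument.
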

\begin{proof}
We will only prove the above result when $s(l)=0$. The other case is handled in the same way. 
\begin{enumerate} 
\item Suppose $x <y$ and consider the martingale $(s(y)- s(X_{t\wedge T_y})$, where $X$ is a solution of (\ref{e:sdeR}) with $X_0=x$, and $T_y$ is the first passage time of $y$ by $X$. Then, Girsanov's theorem in conjunction with the weak uniqueness of  solutions of (\ref{e:sdel}) yields 
\[
Q^{0}_tf(x)=\frac{E^x\left[f(X_t)\left(s(y)- s(X_{t\wedge T_y})\right)\right]}{s(y)-s(x)}=\frac{E^x\left[f(X_t)\left(s(y)- s(X_t)\right)\chf_{[t<T_y]}))\right]}{s(y)-s(x)},
\]
which establishes the identity for $x <y$. 
\item Since $(Q^0_t)$ is the semigroup of $R$, it is self-dual with respect to $m_l$. Its potential kernel with respect to $m_l$ is symmetric and is given by
\[
u_l(x,z)=s_l(x)-s_l(l)=\frac{s(x)}{s(y)(s(y)-s(x))}, \qquad x \leq z,
\]
since $s_l(y)=\infty$ (see the beginning of p.20 in \cite{BorSal}). Next, define
\[
v(x):=\int_l^r u_l(x,z)\eps_y(dz)=u_l(x,y)=\frac{s(x)}{s(y)(s(y)-s(x))},
\]
where $\eps_y$ is the point mass at $y$. Since $G_z$ is a $Q^{y,0}$-a.s. finite co-optional time, Theorem \ref{t:timereversal} yields that, under $Q^{y,0}$, the transition semigroup of the time-reversed process, $(R_{G_z-t})_{0<t<G_z}$,  is given by
\[
\tilde{P}_tf(x)=\frac{Q_t^0fv(x)}{v(x)}.\]
On the other hand,  it follows from part (1) that
\[
\frac{Q_t^0fv(x)}{v(x)}=\frac{P_t^0fv(s(y)-s)(x)}{v(x)(s(y)-s(x))}=\frac{P_t^0fs(x)}{s(x)},
\]
which establishes the claims.
\end{enumerate}
\end{proof}
In view of the above proposition and following the footsteps of McKean \cite{MKexc},  any solution of (\ref{e:sdel}) (resp. (\ref{e:sder})) will be called a {\em Bessel-type motion} with law $Q^{x,0}$ (resp. $Q^{x,1}$). 
\section{Main results and their proofs} \label{s:main}
We are now ready to prove our main results. 
\begin{theorem} \label{t:Lbrdige} There exists a filtered probability space, $(\Om, \cG, (\cG_t)_{t \geq 0}, P^{L,a})$, which contains a Bernoulli random variable, $\theta$, with\footnote{See (\ref{e:Pexitr}) for the definition of $\rho$.} $P^{L,a}(\theta=1)=\rho(y)=1-P^{L,a}(\theta=0)$ and the adapted pair $(X,B)$ such that  $(\cG_t)_{t \geq 0}$ is right-continuous, $B$ is a standard Brownian motion independent of $\theta$, and $X$ satisfies
\bea
X_t&=&y+ \int_0^t\sigma(X_s)dB_s + \int_0^t b(X_s)ds +\int_0^{t \wedge \tau^y_{a-}}\sigma^2(X_s)\frac{u_x(X_s,y)}{u(X_s,y)}ds \label{e:sdeLbridge}  \\
&&+\int_{t \wedge \tau^y_{a-}}^t\sigma^2(X_s)\left\{\theta \chf_{[X_s> y]}\frac{s'(X_s)}{s(X_s)-s(y)}-(1-\theta)\chf_{[X_s< y]}\frac{s'(X_s)}{s(y)-s(X_s)}\right\}ds, \qquad t <\zeta. \nn
\eea
Moreover, weak uniqueness holds for the solutions of the above SDE. The law induced by its solutions, denoted by $P^{L,a}$ with a slight abuse of notation, on $C(\bbR_+, I)$   satisfies the following properties:
\begin{enumerate}
\item The mapping $a \mapsto P^{L,a}(E)$ is measurable for any Borel subset, $E$, of $C(\bbR_+, I)$ endowed with the locally uniform topology. 
\item  There exists a filtered probability space, $(\tilde{\Om}, \tilde{\cG}, (\tilde{\cG}_t)_{t \geq 0}, P^{L,g})$, which contains a Bernoulli random variable, $\theta$, with $P^{L,g}(\theta=1)=\rho(y)=1-P^{L,g}(\theta=0)$, another $\bbR_{++}$-valued random variable $\Gamma$ with distribution $g$, and the adapted pair $(X,B)$ such that  i) $(\tilde{\cG}_t)_{t \geq 0}$ is right-continuous; ii) $B$ is a standard Brownian motion; iii) $B$, $\theta$ and $\Gamma$ are mutually independent;  and iv) $X$ solves
\bea
X_t&=&y+ \int_0^t\sigma(X_s)dB_s + \int_0^t b(X_s)ds +\int_0^{t \wedge \tau^y_{\Gamma-}}\sigma^2(X_s)\frac{u_x(X_s,y)}{u(X_s,y)}ds  \label{e:sdeLrandbridge}  \\
&&+\int_{t \wedge \tau^y_{\Gamma-}}^t\sigma^2(X_s)\left\{\theta \chf_{[X_s> y]}\frac{s'(X_s)}{s(X_s)-s(y)}-(1-\theta)\chf_{[X_s< y]}\frac{s'(X_s)}{s(y)-s(X_s)}\right\}ds, \qquad t <\zeta. \nn
\eea
Similarly,  the uniqueness in law holds for the solutions of the above SDE with properties i)-iv). Furthermore, denoting the law of its solutions by $P^{L,g}$, we have the following disintegration formula: 
\be \label{e:disintegration}
P^{L,g}=\int_0^{\infty}g(da)P^{L,a}.
\ee
\item $P^{L,a}(L^y_{\infty}=a)=P^{L,g}(L^y_{\infty}=\Gamma)=1$.
\end{enumerate}
\end{theorem}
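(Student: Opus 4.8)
The plan is to realise the solution as a two-phase process obtained by pasting the recurrent transformation of Section \ref{s:rectr} onto a randomly selected Bessel-type motion of Section \ref{s:excursion} at the inverse local time $\tau^y_{a-}$, and then to deduce all three listed properties from this pathwise decomposition together with weak uniqueness. For existence I would work on a product space carrying a Bernoulli variable $\theta$ with $P(\theta=1)=\rho(y)$, a recurrent-transform path with law $R^{h,y}$ driven by a Brownian motion $B^1$ independent of $\theta$, and, independently of $(\theta,B^1)$, a second Brownian motion $B^2$ used to drive the continuation. By Proposition \ref{p:step1}(3) the recurrent transform satisfies $\tau^y_{a-}<\infty$ and $X_{\tau^y_{a-}}=y$ almost surely, so at $\tau^y_{a-}$ I can concatenate the solution $Q^{y,1}$ of (\ref{e:sder}) if $\theta=1$ or $Q^{y,0}$ of (\ref{e:sdel}) if $\theta=0$, both of which start from the \emph{entrance} boundary $y$ and exist uniquely by Proposition \ref{p:step2} and Theorem \ref{t:SDEent}. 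Defining $B$ to equal $B^1$ on $[0,\tau^y_{a-}]$ and the shifted $B^2$ afterwards, the strong Markov property of Brownian motion makes $B$ a Brownian motion, and the choice of $B^2$ independent of $(\theta,B^1)$ gives $B\perp\theta$; matching the drift in each of the two regimes shows $(X,B)$ solves (\ref{e:sdeLbridge}).

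The weak uniqueness is what I expect to be the main obstacle. Take any solution $(X,B,\theta)$ of the stated form. On $[0,\tau^y_{a-}]$ equation (\ref{e:sdeLbridge}) collapses to (\ref{e:step1}), so $X^{\tau^y_{a-}}$ solves the stopped version of (\ref{e:step1}). The crucial observation is that conditioning on $[\theta=i]$ leaves $B$ a Brownian motion, since $B\perp\theta$; hence the weak uniqueness in Proposition \ref{p:step1}(1) forces the conditional law of $X^{\tau^y_{a-}}$ given $\theta$ to be the stopped $R^{h,y}$ for \emph{both} values of $i$. This yields the key independence $X^{\tau^y_{a-}}\perp\theta$, so that the pre-$\tau$ law and the value of $\theta$ factorise. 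Since $X_{\tau^y_{a-}}=y$, applying the strong Markov property together with the uniqueness from the entrance boundary $y$ (Proposition \ref{p:step2}, Theorem \ref{t:SDEent}) identifies the post-$\tau$ law given $[\theta=i]$ with $Q^{y,i}$. Consequently the law of $X$ is the fixed pasting of the stopped $R^{h,y}$ with the mixture $\rho(y)Q^{y,1}+(1-\rho(y))Q^{y,0}$, and uniqueness follows.

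Property (3) is then immediate: $\tau^y_{a-}<\infty$ and continuity of $L^y$ give $L^y_{\tau^y_{a-}}=a$, while the Bessel-type continuation never returns to $y$ because $y$ is an entrance — hence non-accessible from the interior — boundary for it, so $L^y$ remains constant at $a$ and $P^{L,a}(L^y_{\infty}=a)=1$. For (1) I would express $P^{L,a}(E)$ as the integral against $R^{h,y}$ and the $Q^{y,i}$ of the indicator of the concatenation map; since $a\mapsto\tau^y_{a-}(\omega)$ is measurable (indeed nondecreasing and right-continuous), this map is jointly measurable in $(a,\omega)$, and a monotone class argument over cylinder sets $E$ gives measurability of $a\mapsto P^{L,a}(E)$.

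For (2) I would adjoin $\Gamma\sim g$ independent of $(B,\theta)$; on the event $[\Gamma=a]$ equation (\ref{e:sdeLrandbridge}) becomes (\ref{e:sdeLbridge}) with that value of $a$, so the same conditioning argument used for uniqueness shows that the regular conditional law of $X$ given $\Gamma=a$ is exactly $P^{L,a}$, delivering both the uniqueness in law for (\ref{e:sdeLrandbridge}) and the disintegration (\ref{e:disintegration}); here the measurability from (1) guarantees that the integral defining $P^{L,g}$ makes sense. Finally, integrating the identity $P^{L,a}(L^y_{\infty}=a)=1$ against $g$ and using the disintegration yields the remaining claim $P^{L,g}(L^y_{\infty}=\Gamma)=1$.
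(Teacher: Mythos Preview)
Your approach is essentially that of the paper: existence is the two-phase pasting of Propositions \ref{p:step1} and \ref{p:step2}, and your treatments of (1)--(3) match the paper's (the paper also reduces (1) to finite-dimensional cylinders, exhibiting the $a$-dependence through $\tau^y_{a-}$ and the absolute-continuity relation (\ref{e:ACstep1}); (2) is likewise a product-space construction plus disintegration; (3) is immediate from the construction). The one visible difference is in the uniqueness argument: the paper simply writes that it ``can be shown by the same argument employed in the proof of Theorem \ref{t:SDEent}'' (i.e.\ a direct time-change), whereas you condition on $\theta$ and appeal to the component uniqueness results. Both routes are valid, and yours is in fact more explicit.

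One point in your uniqueness step deserves care. The claim ``conditioning on $[\theta=i]$ leaves $B$ a Brownian motion, since $B\perp\theta$'' must mean that $B$ remains a $(\cG_t)$-Brownian motion under the conditional probability, because this is what weak uniqueness of (\ref{e:step1}) consumes; the mere fact that the unconditional law of $B$ is preserved is not enough. This is automatic once $\theta\in\cG_0$: then $[\theta=i]\in\cG_s$ for every $s$, so $B_t-B_s\perp\cG_s$ survives conditioning on $[\theta=i]$. From $B\perp\theta$ alone (with $\theta$ unrelated to the filtration) it does \emph{not} follow, and in principle $\theta$ could be correlated with the extra randomness that a genuinely weak solution of (\ref{e:step1}) carries in $\cG_0$, spoiling the independence $X^{\tau^y_{a-}}\perp\theta$ that your argument needs. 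In your constructed solution $\theta\in\cG_0$ by design; for an arbitrary solution you should either read the uniqueness statement with $\theta\in\cG_0$ (which is the natural setup) or add one line placing $\theta$ into $\cG_0$ before conditioning.
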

\begin{proof}
A weak solution can be constructed by the time-change method of Engelbert-Schmidt first applied to a Brownian motion until $\tau^y_{a-}$ and then to a 3-dimensional Bessel process. The uniqueness in law can be shown by the same argument employed in the proof of Theorem \ref{t:SDEent}. The claim on the existence and the uniqueness of the SDE is basically a combination of Propositions \ref{p:step1} and \ref{p:step2}.

\begin{enumerate}
\item Note that it suffices to show
\[
a \mapsto P^{L.a}(X_{t_i} \in E_i;\, i=1, \ldots, n)
\]
is measurable for any $n$, where $0<t_1< \ldots <t_n$ are arbitrary positive real numbers and $E_i$ is an interval contained in $I$. 

Let $J:=\min\{i: t_i \geq \tau^y_{a-}\}$ and observe that $J$ depends on $a$ in a measurable way since $a \mapsto  \tau^y_{a-}$ is measurable due to the monotonicity of $\tau^y_a$ in $a$. Then,
\[
P^{L.a}(X_{t_i} \in E_i;\, i=1, \ldots, n)=E^{h,y}\left[\chf_{[X_{t_1} \in E_1]}\ldots \chf_{[X_{t_{J-1}} \in E_{J-1}]}\pi(\tau^y_{a-},J,\theta)\right],
\]
where $E^{h,y}$ is expectation with respect to $R^{h,y}$, which is the law of the solutions of (\ref{e:step1}), and 
\[
\pi(t,j,\theta)=Q^{y,\theta}(X_{(t_j-t)^+} \in E_j,X_{(t_{j+1}-t)^+} \in E_{j+1}, \ldots, X_{(t_n-t)^+} \in E_n),
\]
with $Q^{y,\theta}$ being the law of (\ref{e:sdel}) or (\ref{e:sder}) depending on the value of $\theta$. Now, since $\theta$ is independent of $B$ and $R^{h,y}$ is absolutely continuous with respect to $P^y$ with
\[
\frac{dR^{h,y}}{dP^y}=\exp\left(\frac{as'(y)}{2u(y,y)}\right)\chf_{[\tau^y_{a-}<\zeta]},
\]
as given by Proposition \ref{p:step1}, the claim follows.
\item Existence follows from constructing the solution on the  product space $C(\bbR_+,I) \times \bbR_+$. We can construct the probability space on $\Om \times \bbR_+$ by considering the  product $\sigma$-algebras generated by the measurable rectangles  and the product measure, $P^L$, obtained as the product of $P^{L,a}$ and $g$, which in particular obeys the disintegration formula. This probability space is then endowed with the natural filtration of the co-ordinate process augmented with the universal null-sets to yield a right-continuous filtration. This ensures the existence of a solution with the given properties. Uniqueness in law follows from the same time-change method of Engelbert-Schmidt applied, e.g. in Theorem \ref{t:SDEent}, previously.
\item This follows from the construction since $L^y_{\tau^y_{a-}}=a$ and $R$ does not visit $y$ after $\tau^y_{a-}$.
\end{enumerate}
\end{proof}
The following corollary establishes the connection between the solutions of (\ref{e:sdeLbridge}) and (\ref{e:sdeLrandbridge}) and the Doob-Meyer decomposition of the solutions of (\ref{e:sdeR}) when the underlying filtration is enlarged with $L^y_{\infty}$. 
\begin{corollary} \label{c:corA} Let $X$ be a regular diffusion satisfying Assumption \ref{a:X} and  $(\Om, \cG, (\cG_t)_{t \geq 0}, P)$ be a filtered probability space  in which it solves (\ref{e:sdeR}) with $X_0=y$. Consider the filtration $(\cH_t)_{t \geq 0}$, where $\cH_t=\cG_t \vee \sigma(L^y_{\infty})$. Then,
\bea
X_t&=&y+ \int_0^t\sigma(X_s)d\beta_s + \int_0^t b(X_s)ds +\int_0^{t \wedge \tau^y_{\Gamma-}}\sigma^2(X_s)\frac{u_x(X_s,y)}{u(X_s,y)}ds \nn \\
&&+\int_{t \wedge \tau^y_{\Gamma-}}^t\sigma^2(X_s)\frac{s'(X_s)}{s(X_s)-s(y)}ds, \qquad t < \zeta, \label{e:Renlarged}
\eea
where $\Gamma:=L^y_{\infty}$ and $\beta$ is an $(\cH_t)_{t \geq 0}$-Brownian motion stopped at $\zeta$.

In particular, $X$ is a weak solution of (\ref{e:sdeLrandbridge}), where $\Gamma=L^y_{\infty}$, $g(da)= \frac{s'(y)}{2u(y,y)}\exp\left(-\frac{as'(y)}{2u(y,y)}\right)da$, and $\theta=\chf_{[R_{\infty}=r]}$.
\end{corollary}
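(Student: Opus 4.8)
The plan is to read the statement as an initial enlargement of the filtration $(\cG_t)$ by the single random variable $\Gamma = L^y_\infty$ and to compute the resulting information drift of the $(\cG_t)$-Brownian motion $B$. The natural break point is the $(\cH_t)$-stopping time $\tau^y_{\Gamma-}$ (it is an $(\cH_t)$-stopping time because $\Gamma$ is $\cH_0$-measurable and $a \mapsto \tau^y_{a-}$ is measurable), which splits the trajectory into the recurrent phase $[0,\tau^y_{\Gamma-})$, where $X$ still returns to $y$, and the final excursion $[\tau^y_{\Gamma-},\zeta)$, where $L^y$ is frozen at $\Gamma$ and $X$ avoids $y$. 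The key computational input is the regular conditional law (\ref{e:L_dist}): writing $c := s'(y)/(2u(y,y))$ and recalling $X_0 = y$, the initial law is $\eta_0(da) = P^y(L^y_\infty \in da) = c\,e^{-ca}\,da$, which is exactly $g$, while on the set $\{a > L^y_t\} = \{t < \tau^y_{a-}\}$ the conditional law has Lebesgue density $c\,\psi(X_t,y)e^{-c(a - L^y_t)}$.

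For the first phase I would form the conditional density of $\Gamma$ relative to $\eta_0$. On $\{a > L^y_t\}$ this ratio is
\[
p^a_t = \frac{c\,\psi(X_t,y)e^{-c(a - L^y_t)}}{c\,e^{-ca}} = \psi(X_t,y)\,e^{c L^y_t} = \frac{u(X_t,y)}{u(y,y)}\,\exp\!\left(\frac{s'(y)L^y_t}{2u(y,y)}\right) = \frac{h(X_t)M_t}{h(y)},
\]
where $h$ and $M$ are the recurrent transform of Proposition \ref{p:step1} and I have used $\psi(x,y) = u(x,y)/u(y,y)$. Two features are decisive: the density does not depend on $a$ on this set, so evaluating at $a = \Gamma$ is unambiguous up to $\tau^y_{\Gamma-}$; and, for fixed $a$, $p^a$ is the bounded $(\cG_t)$-martingale $h(X)M/h(y)$ stopped at $\tau^y_{a-}$ that already underlies (\ref{e:ACrectr})-(\ref{e:ACstep1}). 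Since Proposition \ref{p:step1}(2) shows $h(X)M$ is a local martingale whose continuous martingale part comes solely from the $dB$-term, one has $\langle B, h(X)M\rangle_t = \int_0^t M_s\,u_x(X_s,y)\,\sigma(X_s)\,ds$. Applying the initial-enlargement (Jacod) formula on the stochastic interval $[0,\tau^y_{\Gamma-}]$, the information drift of $B$ is $u_x(X_t,y)\sigma(X_t)/u(X_t,y)\,dt$, so that $\beta_t := B_t - \int_0^{t\wedge\tau^y_{\Gamma-}} \frac{u_x(X_s,y)}{u(X_s,y)}\sigma(X_s)\,ds$ is an $(\cH_t)$-Brownian motion on $[0,\tau^y_{\Gamma-}]$; substituting $dX = \sigma\,dB + b\,dt$ reproduces the drift $b + \sigma^2 u_x/u$ of (\ref{e:Renlarged}) on this interval.

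For the second phase I would argue by the strong Markov property at $\tau^y_{\Gamma-}$. On $[\Gamma = a]$ one has $L^y_t = a$ for every $t \geq \tau^y_{a-}$, hence $X$ never returns to $y$ afterwards; since $X_{\tau^y_{a-}} = y$, conditioning on $\{\Gamma = a\}$ conditions the post-$\tau^y_{a-}$ piece to be the diffusion issued from $y$ and conditioned to avoid $y$ forever. By Proposition \ref{p:step2} this conditioned motion is precisely the Bessel-type motion solving (\ref{e:sder}) when it lies above $y$ and (\ref{e:sdel}) when it lies below $y$; because the final excursion stays strictly on one side of $y$, the single expression $\sigma^2(X)\,s'(X)/(s(X)-s(y))$ reproduces both drifts at once, the sign of $s(X)-s(y)$ selecting the correct direction. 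This yields the second-phase drift of (\ref{e:Renlarged}), and defining $\beta$ on $[\tau^y_{\Gamma-},\zeta)$ through the driving Brownian motion of this Bessel-type motion extends it continuously. Since the two pieces are continuous $(\cH_t)$-local martingales with quadratic variation $t$ that agree at the junction, L\'evy's characterisation shows the glued process $\beta$ is a single $(\cH_t)$-Brownian motion stopped at $\zeta$, completing the proof of (\ref{e:Renlarged}).

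For the final assertion I would match (\ref{e:Renlarged}) to (\ref{e:sdeLrandbridge}). With $\theta := \chf_{[X_\infty = r]}$ the randomised drift in (\ref{e:sdeLrandbridge}) collapses, on the one-sided final excursion, to exactly $\sigma^2 s'/(s-s(y))$, so the two SDEs coincide. The distribution of $\Gamma = L^y_\infty$ under $P^y$ is $g$ by (\ref{e:lawLT}), and $P(\theta = 1) = P^y(X_\infty = r) = \rho(y)$ by (\ref{e:Pexitr}); the independence of $\theta$ and $\Gamma$ is precisely Proposition \ref{p:LTRindep} in the case $x = y$, while $\beta$ is independent of $\Gamma$ because an $(\cH_t)$-Brownian motion is independent of $\cH_0 \supseteq \sigma(\Gamma)$. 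The remaining mutual independence demanded in Theorem \ref{t:Lbrdige}(2) then follows from the weak uniqueness already established there. I expect the main obstacle to be the second-phase transition: the conditional law (\ref{e:L_dist}) carries an atom at $L^y_t$, so Jacod's absolute-continuity hypothesis fails globally and the clean enlargement formula is available only strictly before $\tau^y_{\Gamma-}$; the real work lies in justifying the split at $\tau^y_{\Gamma-}$, identifying the post-$\tau^y_{\Gamma-}$ conditional dynamics with the Bessel-type motion, and gluing the two Brownian pieces into one.
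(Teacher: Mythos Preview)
Your enlargement approach is the same as the paper's in spirit, though the paper invokes Theorem 1.6 and Example 1.7 of \cite{MY} to obtain (\ref{e:Renlarged}) in one stroke---the atom in (\ref{e:L_dist}) is handled there directly---whereas you split at $\tau^y_{\Gamma-}$ and treat the post-$\tau^y_{\Gamma-}$ piece by a separate conditioning argument. Both routes work for the decomposition itself, and the paper, like you, reconstructs $\beta$ from the compensated local martingale and L\'evy's characterisation.

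There is, however, a genuine gap in your treatment of the final assertion. Your claim that the independence of $\beta$ and $\theta$ ``follows from the weak uniqueness already established'' in Theorem \ref{t:Lbrdige} is circular: uniqueness in law for (\ref{e:sdeLrandbridge}) is asserted \emph{for solutions satisfying properties (i)--(iv)}, and property (iii) is precisely the mutual independence of $B$, $\theta$, $\Gamma$. You must verify independence before you can call $X$ a weak solution, so uniqueness cannot supply it. Note too that $\theta=\chf_{[X_\infty=r]}$ is not $\cH_0$-measurable, so the $\cH_0$-argument you used for $\beta\perp\Gamma$ does not transfer. The paper proves this step directly: since $(L^y,X)$ is strong Markov in $(\cH_t)$ given $\Gamma$, and $X_{\tau^y_{\Gamma-}}=y$, $L^y_{\tau^y_{\Gamma-}}=\Gamma$ are determined, $X_\infty$ is independent of $(\beta_{t\wedge\tau^y_{\Gamma-}})_{t\geq 0}$; and since $[X_\infty=r]=[X_t>y,\ \forall t>\tau^y_{\Gamma-}]\in\cH_{\tau^y_{\Gamma-}}$ by right-continuity of $(\cH_t)$, $\theta$ is also independent of the post-$\tau^y_{\Gamma-}$ increments of $\beta$. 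This argument is missing from your outline and cannot be replaced by an appeal to uniqueness.
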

\begin{proof}
Directly following the arguments in the proof of Theorem 1.6 in \cite{MY} and keeping in mind that the $\cG_t$-conditional distribution of $L^y_{\infty}$ has an atom (as in the case of Example 1.7 in \cite{MY}), we deduce that
\[
M_t:=X_t-y-\int_0^t b(X_s)ds -\int_0^{t \wedge \tau^y_{\Gamma-}}\sigma^2(X_s)\frac{u_x(X_s,y)}{u(X_s,y)}ds -\int_{t \wedge \tau^y_{\Gamma-}}^t\sigma^2(X_s)\frac{s'(X_s)}{s(X_s)-s(y)}ds
\]
is an $(\cH_t)_{t\geq 0}$ local martingale stopped at $\zeta$. Note that the above is well-defined since, the integrand of the last integral has a constant sign and $P$-a.s.
\[
\int_0^{t }\left\{\sigma^2(X_s)+|b(X_s)|\right\}ds <\infty, \mbox{ on } [t <\zeta],
\]
by the definition of weak solutions. Therefore, on $ [t <\zeta]$, we have
\[
\int_0^{t \wedge \tau^y_{\Gamma-}}\sigma^2(X_s)\left|\frac{u_x(X_s,y)}{u(X_s,y)}\right|ds<\infty
\]
due to the continuity (and therefore boundedness) of $\frac{u_x(X,y)}{u(X,y)}$ on compact subintervals of $[0, \zeta)$.

It can also be directly verified that $[M,M]_t=\int_0^t\sigma^2(X_s)ds$ on $[t <\zeta]$. Thus, since $\sigma(x)>0$ for $x \in (l,r)$ by assumption, we may define
\[
\beta_t:=\int_0^{t \wedge \zeta}\frac{1}{\sigma(X_s)}dM_s.
\]
Observe that the above stochastic integral is well-defined since $M$ is a continuous local martingale and on $[t<\zeta]$
\[
\int_0^{t }\frac{1}{\sigma^2(X_s)}d[M,M]_s= t.
\]
By L\'evy's characterisation we easily deduce that $\beta$ is an $(\cH_t)_{t \geq 0}$-Brownian motion stopped at $\zeta$, which in turn yields that $X$ satisfies (\ref{e:Renlarged}). Moreover, $\beta$ is independent of $L^Y_{\infty}$.

To show that $X$ is the weak solution of (\ref{e:sdeLrandbridge}) with the given set of $\Gamma, g,$ and $\theta$, recall from (\ref{e:lawLT}) that the $P$-distribution of $L^y_{\infty}$ is exponential with parameter $\frac{1}{u(y,y)}$ and observe that the dynamics of $X$ in (\ref{e:Renlarged}) is the same as those given by (\ref{e:sdeLrandbridge}) once we replace $B$ with $\beta$ and notice that $[\theta=1]=[X_{t+\tau^y_{\Gamma-}} >y]$ for all $t>0$.  Moreover, $P(\theta=1)=\rho(y)$ is trivially satisfied due to the definition of $\rho$. Thus, it remains to show that $\theta$, $\beta$ and $L^y_{\infty}$ are independent from each other.

We have already observed that $\beta$ and $L^y_{\infty}$ are independent. The independence of $L^y_{\infty}$ and $X_{\infty}$ under $P$ is obvious when only one of $s(l)$ and $s(r)$ is finite. If both of them are finite, the result follows from Proposition \ref{p:LTRindep}.

Note that $(L^y_t, X_t)$ is strong Markov with respect to $(\cH_t)_{t\geq 0}$ given $L^y_{\infty}$. Thus, given $X_{\tau^y_{\Gamma-}}$ and $L^y_{\tau^y_{\Gamma-}}$, $X_{\infty}$ is independent of $(\beta_{t \wedge \tau^y_{\Gamma-}})_{t \geq 0}$. However, 
$X_{\tau^y_{\Gamma-}}=y$ and $L^y_{\tau^y_{\Gamma-}}=\Gamma=L^y_{\infty}$. Since $L^y_{\infty}$ is independent of $\beta$, we deduce that $X_{\infty}$ is independent of $(\beta_{t \wedge \tau^y_{\Gamma-}})_{t \geq 0}$, too. Moreover, $[X_{\infty}=r]=[X_t >y, \forall t >\tau^y_{\Gamma-}]$ implies $[X_{\infty}=r]\in \cH_{\tau^y_{\Gamma-}}$ since $(\cH_t)_{t \geq 0}$ is right-continuous. Therefore, $X_{\infty}$ is independent of $(\beta_{t + \tau^y_{\Gamma-}}-\beta_{\tau^y_{\Gamma-}})_{t \geq 0}$, hence, of $\beta$.

\end{proof}

Theorem \ref{t:Lbrdige} and Corollary \ref{c:corA} establish that the law of  solutions of (\ref{e:sdeLbridge}) is that of the solution of (\ref{e:sdeR}) conditioned on $[L^y_{\infty}=a]$ in view of the uniqueness in law of  solutions of (\ref{e:sdeLrandbridge}) and the disintegration formula (\ref{e:disintegration}) as stated below.
\begin{corollary} \label{c:corB}
Let $P^{L,a}$ be the law on $C(\bbR_+,I)$ induced by solutions of (\ref{e:sdeLbridge}) and consider the canonical space $(C(\bbR_+, I), \cB, P^y)$, where $\cB$ is the Borel $\sigma$-algebra on $C(\bbR_+, I)$. Then, for all $t>0$ and   any bounded and measurable $F:C([0,t],I)\mapsto \bbR$ and $h:\bbR_+\mapsto \bbR$, the following holds:
\be \label{e:RCP}
\int_0^{\infty}E^{L,a}\left[F(X_s;s\leq t)\right]h(a)\frac{1}{u(y,y)}\exp\left(-\frac{a}{u(y,y)}\right)da=E^y\left[F(X_s;s\leq t)h(L^y_{\infty})\right].
\ee
That is, $P^{L,a}$ is a regular conditional probability of $\cB$  given $L^y_{\infty}=a$.

Consequently, if $X$ is a solution of (\ref{e:sdeLrandbridge}) with $g(da)=\frac{s'(y)}{2u(y,y)}\exp\left(-\frac{as'(y)}{2u(y,y)}\right)da$, then, in its own filtration, it's a regular diffusion on $(l,r)$ with scale function $s$ and speed measure $m$. 
\end{corollary}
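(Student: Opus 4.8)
The plan is to obtain both assertions by assembling ingredients already in place: the uniqueness in law and the disintegration formula (\ref{e:disintegration}) of Theorem \ref{t:Lbrdige}, together with the enlargement identity of Corollary \ref{c:corA}. The guiding observation is that Corollary \ref{c:corA} realises the original diffusion $X$, started from $y$ under $P^y$, as a bona fide solution of the randomised SDE (\ref{e:sdeLrandbridge}) satisfying properties i)--iv), with $\Gamma=L^y_\infty$, with the $\theta$ identified there, and with $g(a)\,da$ equal to the $P^y$-law of $L^y_\infty$, which is exponential by (\ref{e:lawLT}) and is exactly the weight appearing on the left of (\ref{e:RCP}). Consequently the uniqueness in law in Theorem \ref{t:Lbrdige}(2) forces the path-space law of $X$ under $P^y$ to coincide with $P^{L,g}$ for this particular $g$; this identification is the engine behind both conclusions.

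To establish (\ref{e:RCP}) I would fix $t>0$ and bounded measurable $F$ and $h$, and apply this identification to the bounded measurable path functional $\omega\mapsto F(X_s(\omega);s\le t)\,h(L^y_\infty(\omega))$. Since the law of $X$ under $P^y$ equals $P^{L,g}$, one has
\[
E^y\!\left[F(X_s;s\le t)\,h(L^y_\infty)\right]=E^{L,g}\!\left[F(X_s;s\le t)\,h(L^y_\infty)\right].
\]
Disintegrating the right-hand side via (\ref{e:disintegration}) rewrites it as $\int_0^\infty g(da)\,E^{L,a}[F(X_s;s\le t)\,h(L^y_\infty)]$, and invoking Theorem \ref{t:Lbrdige}(3), which gives $L^y_\infty=a$ holding $P^{L,a}$-a.s., lets me replace $h(L^y_\infty)$ by the constant $h(a)$ inside each $E^{L,a}$. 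This is precisely the left-hand side of (\ref{e:RCP}). To read off that $P^{L,a}$ is a regular conditional probability of $\cB$ given $L^y_\infty=a$, I would observe that the functionals $F(X_s;s\le t)$, as $t$ and $F$ vary, generate $\cB$, so a functional monotone class argument extends (\ref{e:RCP}) to all bounded $\cB$-measurable functionals; combined with the measurability of $a\mapsto P^{L,a}(E)$ from Theorem \ref{t:Lbrdige}(1), this is exactly the defining property of a regular conditional probability.

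For the concluding assertion, take any $X$ solving (\ref{e:sdeLrandbridge}) with the stated exponential $g$. By the uniqueness in law of Theorem \ref{t:Lbrdige}(2) its path-space law is $P^{L,g}$, which the opening paragraph identified with the law of the original diffusion under $P^y$. Hence, as a process, $X$ has law $P^y$; in particular it is Markov in its own filtration with the transition semigroup of the diffusion with scale function $s$ and speed measure $m$, which is the claim.

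The main obstacle I anticipate is not in the assembly above but in justifying the opening identification cleanly. One must confirm that the triple $(\beta,\theta,\Gamma)$ produced in Corollary \ref{c:corA} genuinely satisfies the mutual-independence requirement iii) and that $\Gamma=L^y_\infty$ carries the exponential law $g$, so that the original diffusion qualifies as a solution of (\ref{e:sdeLrandbridge}) to which the uniqueness statement legitimately applies. Both facts are supplied by Corollary \ref{c:corA} and (\ref{e:lawLT}), but it is worth being explicit that Theorem \ref{t:Lbrdige}(2) asserts uniqueness \emph{in law} only among solutions obeying i)--iv); verifying these hypotheses for the original diffusion is exactly what licenses replacing $P^y$ by $P^{L,g}$ and thereby drives the whole argument.
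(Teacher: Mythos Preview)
Your proposal is correct and follows essentially the same route as the paper: use Corollary~\ref{c:corA} to exhibit the original diffusion under $P^y$ as a solution of (\ref{e:sdeLrandbridge}) with the exponential $g$, invoke the uniqueness in law and the disintegration formula of Theorem~\ref{t:Lbrdige}, and read off both conclusions. Your write-up is in fact more careful than the paper's terse version---you make explicit the role of Theorem~\ref{t:Lbrdige}(3) in replacing $h(L^y_\infty)$ by $h(a)$, the monotone-class passage to all of $\cB$, and the need to verify hypotheses i)--iv) before appealing to uniqueness---but the underlying argument is the same.
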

\begin{proof} Let $X$ be a solution of (\ref{e:sdeR}) in some filtered probability space with a right-continuous filtration. As we have seen in Corollary \ref{c:corA}, it  follows (\ref{e:Renlarged}) when the filtration is enlarged with $L^y_{\infty}$. The same corollary also yields that $X$ is a weak solution of  (\ref{e:sdeLrandbridge}) when $g(da)=\frac{s'(y)}{2u(y,y)}\exp\left(-\frac{as'(y)}{2u(y,y)}\right)da$. Since weak uniqueness holds for (\ref{e:sdeLrandbridge}), we obtain  (\ref{e:RCP}) in view of (\ref{e:disintegration}). 

The second claim follows from taking $h\equiv 1$ in (\ref{e:RCP}), in which case the left-hand side of (\ref{e:RCP}) equals
\[
E^{L,g}\left[F(R_s;s\leq t)\right].
\]
Thus, $P^{L,g}=P^y$, which implies the claim. 
\end{proof}

What is hidden, in fact, in the second part of the above corollary is a new path decomposition result for the original process $X$ which can be restated, and upgraded to a theorem, as follows:
\begin{theorem} \label{t:pathdecomp}
Let $X$ be a weak solution of (\ref{e:sdeR}) with scale function $s$ and potential kernel $u$. Pick a $y \in (l,r)$ and on a suitable probability space set up the following four independent elements:
\begin{enumerate}
\item An exponential random variable, $\Gamma$, with mean $\frac{2u(y,y)}{s'(y)}$.
\item A Bernoulli random variable, $\theta$, with $\bbP(\theta=1)=\rho(y)$, where $\rho$ is as in (\ref{e:Pexitr}).
\item A process $Y$, which is  a $(u(\cdot,y),M)$ recurrent transform of $X$ run upto $\tau^y_{\Gamma-}$, where $M_t=\exp\left(\frac{s'(y)L_t}{2u(y,y)}\right)$.
\item A pair of Bessel-type motions, $(R^0,R^1)$ with laws $(Q^{y,0},Q^{y,1})$ and lifetimes $(\zeta^0,\zeta^1)$. 
\end{enumerate}
Then, the process defined by
\[
\tilde{X}_t:=\left\{\ba{ll} 
Y_t, & t \leq \tau^y_{\Gamma-}\\
R^{\theta}_{t-\tau^y_{\Gamma-}}, & 0<t-\tau^y_{\Gamma-}\leq \zeta^{\theta},
\ea
\right.
\]
has the same law as $X$.
\end{theorem}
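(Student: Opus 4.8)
The plan is to recognise the pasted process $\tilde X$ as a version of the solution of (\ref{e:sdeLrandbridge}) with the specific randomisation $g(da)=\frac{s'(y)}{2u(y,y)}\exp\left(-\frac{as'(y)}{2u(y,y)}\right)da$ and $\theta$ Bernoulli with $\mathbb{P}(\theta=1)=\rho(y)$, and then to quote the second assertion of Corollary \ref{c:corB}, which identifies the law $P^{L,g}$ of that solution with $P^y$. Thus the whole argument reduces to matching the law of $\tilde X$ with $P^{L,g}$, and the most economical way to do this is to match finite-dimensional distributions and appeal to the disintegration formula (\ref{e:disintegration}), rather than to reconstruct an explicit driving Brownian motion.

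First I would fix $a>0$ and compute the conditional law of $\tilde X$ given $\{\Gamma=a\}$. Here $Y$ is the full $(u(\cdot,y),M)$-recurrent transform, with law $R^{h,y}$ and independent of $\Gamma$, while $\tau^y_{\Gamma-}=\inf\{t:L^y_t(Y)\geq\Gamma\}$ is derived from the local time of $Y$. On $[0,\tau^y_{\Gamma-}]$ the process $\tilde X$ coincides with $Y$; by Proposition \ref{p:step1}(3) the time $\tau^y_{a-}$ is $R^{h,y}$-a.s.\ finite, and by continuity of $X$ and of $L^y$ one has $Y_{\tau^y_{a-}}=y$ and $L^y_{\tau^y_{a-}}=a$ there. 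From time $\tau^y_{\Gamma-}$ onwards $\tilde X$ is the independently sampled Bessel-type motion $R^\theta$ started at $y$, which by Proposition \ref{p:step2} is an entrance point for both $Q^{y,0}$ and $Q^{y,1}$, so the concatenation is continuous. Consequently, conditional on $\{\Gamma=a\}$ the finite-dimensional distributions of $\tilde X$ are exactly those displayed in the proof of Theorem \ref{t:Lbrdige}(1): writing $J=\min\{i:t_i\geq\tau^y_{a-}\}$,
\[
\mathbb{P}\left(\tilde X_{t_i}\in E_i,\ i\leq n \mid \Gamma=a\right)=E^{h,y}\left[\chf_{[X_{t_1}\in E_1]}\cdots\chf_{[X_{t_{J-1}}\in E_{J-1}]}\,\pi(\tau^y_{a-},J,\theta)\right],
\]
with $\pi(t,j,\theta)=Q^{y,\theta}(X_{(t_j-t)^+}\in E_j,\ldots,X_{(t_n-t)^+}\in E_n)$. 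This is precisely $P^{L,a}$, because $P^{L,a}$ already carries the Bernoulli randomisation of $\theta$ with $\mathbb{P}(\theta=1)=\rho(y)$ built into (\ref{e:sdeLbridge}). Hence the conditional law of $\tilde X$ given $\{\Gamma=a\}$ is $P^{L,a}$.

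Since $\Gamma$ has density $g$ and is independent of $Y$, $\theta$ and $(R^0,R^1)$, integrating the previous identity against $g$ gives, via the disintegration formula (\ref{e:disintegration}), that the unconditional law of $\tilde X$ is $\int_0^\infty g(da)\,P^{L,a}=P^{L,g}$. The exponential density prescribed in the statement (mean $\frac{2u(y,y)}{s'(y)}$) is exactly the one appearing in Corollary \ref{c:corB}, whose second assertion states that for this $g$ the solution of (\ref{e:sdeLrandbridge}) is, in its own filtration, a regular diffusion on $(l,r)$ with scale $s$ and speed $m$, i.e.\ $P^{L,g}=P^y$. Therefore $\tilde X$ has the law of $X$, as claimed. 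I would also remark that the construction is automatically consistent in the degenerate cases: when $s(r)=\infty$ one has $\rho(y)=0$, so $\theta=0$ a.s.\ and only $R^0$ (the sole Bessel-type motion permitted by Proposition \ref{p:step2}) is ever pasted, and symmetrically when $s(l)=-\infty$.

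The step I expect to be the main obstacle is the identification of the conditional law given $\{\Gamma=a\}$ with $P^{L,a}$, i.e.\ the legitimacy of the pasting. The delicate point is that $\tau^y_{\Gamma-}$ is built from the local time of $Y$ whereas $\Gamma$ is independent of $Y$, so one must argue that stopping $Y$ at $\tau^y_{\Gamma-}$ and then grafting an independent excursion-type piece preserves the Markovian structure; the two facts that make this work are $Y_{\tau^y_{\Gamma-}}=y$ and that $y$ is the common entrance point of $Q^{y,0}$ and $Q^{y,1}$, so the post-$\tau^y_{\Gamma-}$ evolution depends on the past only through the value $y$ together with the independent randomiser $\theta$. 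Carrying this out rigorously amounts to invoking the strong Markov property of $Y$ at $\tau^y_{\Gamma-}$ together with the independence built into the construction; once it is in place, matching finite-dimensional distributions with those computed in Theorem \ref{t:Lbrdige}(1) is routine, and Corollary \ref{c:corB} closes the argument. Alternatively, one could construct a single driving Brownian motion by concatenating those of $Y$ and $R^\theta$ and verify that $\tilde X$ solves (\ref{e:sdeLrandbridge}) with the independence conditions i)--iv) and then invoke uniqueness in law; but verifying the independence of that Brownian motion from $(\theta,\Gamma)$ is more delicate than the finite-dimensional route, so I would prefer the latter.
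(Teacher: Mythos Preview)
Your proposal is correct and follows exactly the route the paper takes: the paper does not give a separate proof of Theorem \ref{t:pathdecomp} at all, but introduces it with the sentence ``What is hidden, in fact, in the second part of the above corollary is a new path decomposition result \ldots which can be restated, and upgraded to a theorem, as follows,'' i.e.\ it treats the theorem as an immediate restatement of the second assertion of Corollary \ref{c:corB}. You have simply spelled out the identification $\mathrm{Law}(\tilde X\mid\Gamma=a)=P^{L,a}$ via the finite-dimensional computation from the proof of Theorem \ref{t:Lbrdige}(1), integrated against $g$ using (\ref{e:disintegration}), and invoked Corollary \ref{c:corB}---which is precisely the content the paper leaves implicit.
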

The above path decomposition result yields in particular an algorithm to simulate transient diffusions.  Note that the random variable $\theta$ is needed only if $s(l)=1-s(r)=0$. It must be emphasised that, in this case,  the Bessel-type motions in the above setup are not independent from each other. In fact, the underlying Brownian motions in their SDE representations are the same. This is indeed possible since (\ref{e:sdel}) and (\ref{e:sder}) have the same coefficients and  differ only in their choice of the state space.

\section{Examples} \label{s:examples}
In this section we present some explicit examples that follow from Theorems \ref{t:Lbrdige} and \ref{t:pathdecomp}.
\subsection{Killed Brownian motion} Suppose $X$ is a Brownian motion on $(-\infty,b)$ killed at $b>0$. Thus, $s(x)=\frac{x}{b}$,  $u(x,y)= 1- s(x \vee y)$, and $X_{\infty}=b$, which is reached in finite time, a.s.. Taking  $y=0$ the equation (\ref{e:sdeLbridge}) reads as
\be \label{e:condkBM}
X_t=B_t - \int_0^{t\wedge \tau_{a-}^0} \frac{1}{b-X_s}\chf_{[X_s > 0]}ds +\int_{t\wedge \tau_{a-}^0}^t \frac{1}{X_s}ds, \qquad t < \zeta,
\ee
where $\zeta$ is the first hitting time of $b$, which occurs in finite time. 

The first integral represents the recurrent transform, $Y$, stopped at $\tau^0_{a-}$, where
\[
Y_t= B_t - \int_0^{t} \frac{1}{b-Y_s}\chf_{[Y_s >0]}ds.
\]
If we let $U:=b-Y$, then
\be \label{e:recKBU}
U_t= b+ \beta_t +\int_0^t \frac{1}{U_s}\chf_{[U_s<b]}ds,
\ee
where $\beta=-B$. Although $U$ resembles a $3$-dimensional process, it clearly isn't since it is recurrent. It behaves like a Bessel process on the interval $(0,b)$, otherwise it moves like a Brownian motion, which makes it recurrent. This hints at the guess $U$ being a recurrent transform of a Bessel process, which turns out to be true.
\begin{proposition} Let $U$ be as in (\ref{e:recKBU}). Then, it is an $(h,M)$-recurrent transform of the $3$-dimensional Bessel process, $R$, which solves 
\be \label{e:3dB}
R_t=R_0+B_t +\int_0^t \frac{1}{R_s}ds,
\ee
where 
\[
h(x)= \frac{1}{x \vee b}, \; M_t= \exp\left(\frac{L^b_t}{2b}\right),
\]
and $L^b$ is the semimartingale local time of $R$ at $b$.
\end{proposition}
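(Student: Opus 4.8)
The plan is to identify $U$ as a recurrent transform of the $3$-dimensional Bessel process $R$ by verifying all four conditions in the definition of a recurrent transform, and then to confirm that the Girsanov change of measure induced by $(h,M)$ indeed produces the drift appearing in (\ref{e:recKBU}). First I would record the basic data of $R$: it is the unique strong solution of (\ref{e:3dB}) on $(0,\infty)$ with $s_R(x)=-1/x$ (up to affine normalisation), scale chosen so that $0$ is an entrance boundary and $R_t\to\infty$, speed measure $m_R(dx)=2x^2\,dx$, and diffusion coefficient $\sigma_R\equiv 1$, drift $b_R(x)=1/x$. The proposed $h(x)=1/(x\vee b)$ is continuous on $(0,\infty)$, strictly positive, with finite limits $h(0+)=1/b$ and $h(\infty-)=0$, so the standing hypotheses on $h$ in the definition hold.

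Next I would check the local-martingale property, condition (2). By the Itô--Tanaka formula applied to the convex/concave pieces of $x\mapsto h(x)=1/(x\vee b)$, one computes $d\,h(R_t)$ picking up a local-time term at $b$; the point is that $h(R)M$ with $M_t=\exp(L^b_t/(2b))$ is precisely the product that cancels this local-time contribution, leaving a stochastic integral against $dB$. This is the standard mechanism behind (\ref{e:sderectr}): since $R$ is in natural scale $s_R=-1/x$ and $h^2=1/(x\vee b)^2$, the logarithmic derivative $h'/h=-\frac{1}{x}\chf_{[x>b]}$ produces, after the Girsanov tilt by $h(R)M$, the extra drift $\sigma_R^2(h'/h)=-\frac1x\chf_{[x>b]}$ added to the Bessel drift $1/x$; together these give drift $\frac1x\chf_{[x<b]}$, matching (\ref{e:recKBU}) exactly. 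I would present this computation of the post-transform drift as the heart of the verification.

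Then I would verify conditions (3) and (4). For (3) the relevant scale function is $r_s(x)=\int_c^x s_R'(z)/h^2(z)\,dz=\int_c^x (z\vee b)^2/z^2\,dz$; near $0$ the integrand is $\sim b^2/z^2$, forcing $r_s(0+)=-\infty$, and near $\infty$ the integrand is $\sim 1$, forcing $r_s(\infty-)=+\infty$, so the transformed diffusion is recurrent and never exits $(0,\infty)$, consistent with the recurrence already noted for $U$. For (4), the transformed SDE has coefficients $\sigma\equiv 1$ and drift $\frac1x\chf_{[x<b]}$, which satisfies the Engelbert--Schmidt condition (\ref{e:ESR}) on $(0,\infty)$ (the only singularity is the integrable $1/x$ near $0$, harmless since $0$ is never reached), so a unique weak solution exists for all $t\geq 0$ by the same argument as in Proposition \ref{p:step1}(1); this is exactly equation (\ref{e:recKBU}) with $\beta=-B$.

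The main obstacle, and the step demanding the most care, is the local-time bookkeeping in condition (2): one must correctly account for the Itô--Tanaka local-time term of $R$ at the kink $b$ of $h$, relate the diffusion/semimartingale local times via the factor $s_R'$ (as flagged in the Remark following (\ref{e:sderectr})), and confirm that the exponent $L^b_t/(2b)$ in $M$ is precisely what is needed to make $h(R)M$ a genuine local martingale rather than merely a semimartingale. Once this cancellation is verified, the identification of the transformed drift and hence the equality with (\ref{e:recKBU}) is immediate, and recurrence plus weak uniqueness follow from the scale computation and the Engelbert--Schmidt criterion as above.
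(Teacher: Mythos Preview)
Your proposal is correct, but it proceeds by direct verification of the four conditions in the definition of a recurrent transform, whereas the paper takes a shortcut. The paper simply observes that for the $3$-dimensional Bessel process the potential kernel is $u(x,z)=1/(x\vee z)$ and the scale function is $1-1/x$, so that $h(x)=1/(x\vee b)$ is precisely $u(\cdot,b)$ and the exponent $L^b_t/(2b)$ equals $s'(b)L^b_t/(2u(b,b))$. This places the pair $(h,M)$ exactly in the framework of Proposition~\ref{p:step1}, which has already established the local-martingale property, the divergence of $r_s$ at both endpoints, and weak uniqueness of the transformed SDE in full generality. The remaining work in the paper is a one-line computation of the transformed drift $\sigma^2 u_x(\cdot,b)/u(\cdot,b)=-\tfrac{1}{x}\chf_{[x>b]}$ and the observation that $\int_0^t\chf_{[X_s=b]}\,ds=0$, so the indicator $\chf_{[x\leq b]}$ can be replaced by $\chf_{[x<b]}$. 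Your route is more self-contained and makes the mechanism (the It\^o--Tanaka cancellation at the kink) explicit, which is pedagogically useful; the paper's route is shorter because the It\^o--Tanaka computation, the scale divergence, and the Engelbert--Schmidt check have all been absorbed into the proof of Proposition~\ref{p:step1} and need not be repeated here.
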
 
\begin{proof} The potential kernel, $u$, for $R$ is given by
\[
u(x,y)=\frac{1}{x\vee y}
\]
while its scale function is $1-\frac{1}{x}$. 

Proposition \ref{p:step1} yields that  $(u(\cdot,b), M)$ is a recurrent transform which results in the SDE
\[
dX_t=dB_t + \left\{\frac{1}{X_t}-\chf_{[X_t>b]}\frac{1}{X_t}\right\}dt=dB_t + \chf_{[X_t\leq b]}\frac{1}{X_t}dt.
\]
Since $[X_t=b]$ is a null set, $\int_0^t \chf_{[X_s=b]}ds=0$,  which in turn yields the claim. 
\end{proof}
Having characterised the recurrent transform and the Bessel process that led to (\ref{e:condkBM}), we can now state the path decomposition for the killed Brownian motion using Theorem \ref{t:pathdecomp}.
\begin{corollary}  On a suitable probability space set up the following three independent elements:
\begin{enumerate}
\item An exponential random variable, $\Gamma$, with mean $2b$.
\item A weak solution, $U$, of (\ref{e:recKBU}).
\item A $3$-dimensional Bessel process, $R$, which solves (\ref{e:3dB}) with $R_0=0$. 
\end{enumerate}
Consider
\[
\tilde{X}_t:=\left\{\ba{ll} 
b-U_t, & t \leq \tau^b_{\Gamma-}\\
R_{t-\tau^b_{\Gamma-}}, & 0<t-\tau^y_{\Gamma-}\leq S_b, 
\ea
\right.
\]
where $(\tau^b_t)_{t \geq 0}$ is the right-continuous inverse of the local time of $U$ at level $b$ and $S_b:=\inf\{t\geq 0: R_t=b\}$. Then, $\tilde{X}$ has the same law as the Brownian motion starting at $0$ and killed at $b$. 
\end{corollary}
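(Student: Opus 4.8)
The plan is to specialise the path-decomposition Theorem \ref{t:pathdecomp} to this concrete diffusion and to check that each of its four ingredients reduces to the objects listed in the corollary. First I would compute the parameters. Since $s(x)=x/b$ we have $s'(0)=1/b$, and from $u(x,y)=1-s(x\vee y)$ we get $u(0,0)=1-s(0)=1$; hence the exponential random variable $\Gamma$ of Theorem \ref{t:pathdecomp} has mean $\frac{2u(0,0)}{s'(0)}=2b$, matching item (1). Because $s(l)=s(-\infty)=-\infty$ while $s(r)=s(b)=1$, we are in the regime (\ref{e:psiif}) in which $X_t\rar r=b$ almost surely; consequently $\rho(0)=P^0(X_\infty=b)=1$, so the Bernoulli variable $\theta$ equals $1$ with probability one and may be dropped. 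This already explains why the corollary lists only three independent ingredients rather than four, and why a single Bessel-type motion appears.

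Next I would match the recurrent-transform piece. The process $Y$ of Theorem \ref{t:pathdecomp} is the $(u(\cdot,0),M)$-recurrent transform of the killed Brownian motion, i.e. the solution of (\ref{e:step1}) with $y=0$; as already observed in the derivation of (\ref{e:condkBM}), the change of variables $U:=b-Y$ turns this into (\ref{e:recKBU}). Since $Y_t=b-U_t$, the event $[Y_t=0]$ coincides with $[U_t=b]$, so the local time of $Y$ at $0$ equals the local time of $U$ at $b$, and therefore their right-continuous inverses agree: $\tau^0_{\Gamma-}(Y)=\tau^b_{\Gamma-}(U)$. Thus the first branch $b-U_t$, $t\leq\tau^b_{\Gamma-}$, in the corollary is literally $Y_t$, $t\leq\tau^0_{\Gamma-}$, as required by the theorem.

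It then remains to identify the Bessel-type piece. With $\theta=1$ the theorem uses only the motion with law $Q^{0,1}$, and evaluating the SDE (\ref{e:sder}) with $\sigma\equiv1$, zero drift and $s(x)=x/b$ gives the drift $\frac{s'(R)}{s(R)-s(0)}=\frac{1}{R}$, i.e. precisely the $3$-dimensional Bessel SDE (\ref{e:3dB}). The Bessel-type motion $Q^{0,1}$ lives on $(0,b)$ and is killed upon reaching $b$, so its lifetime $\zeta^1=\inf\{t:R^1_{t-}=b\}$ coincides with $S_b=\inf\{t:R_t=b\}$ for the genuine $3$-dimensional Bessel process; hence running $R$ of (\ref{e:3dB}) up to $S_b$ reproduces $R^1$ run up to $\zeta^1$. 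Feeding these identifications into the pasting formula of Theorem \ref{t:pathdecomp} produces exactly the process $\tilde X$ of the corollary, and the theorem asserts that it has the law of Brownian motion started at $0$ and killed at $b$.

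The only point that I would treat with care — and the main (if modest) obstacle — is the identification of the second branch. One must confirm both that $\theta$ is deterministic, so that $Q^{0,0}$ never occurs and the interval $(-\infty,0)$ plays no role, and that restricting the full $3$-dimensional Bessel process to the random interval $[0,S_b)$ genuinely yields the Bessel-type motion $Q^{0,1}$ on $(0,b)$ rather than some $h$-transform of it. Both are immediate here because, as the remark following Theorem \ref{t:pathdecomp} notes, the relevant SDEs differ only in their choice of state space; but it is worth recording explicitly, since for a general transient diffusion the Bessel-type motion need not coincide with a classical process.
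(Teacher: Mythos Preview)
Your proposal is correct and follows exactly the approach the paper intends: the corollary is stated immediately after the sentence ``we can now state the path decomposition for the killed Brownian motion using Theorem \ref{t:pathdecomp}'', and the paper gives no further proof. You have simply written out the verifications that the paper leaves to the reader --- the computation of the mean $2u(0,0)/s'(0)=2b$, the degeneracy $\rho(0)=1$ eliminating $\theta$, the identification $Y=b-U$ matching the local times, and the reduction of (\ref{e:sder}) to the $3$-dimensional Bessel SDE --- all of which are accurate.
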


\subsection{Ornstein-Uhlenbeck process}
Consider the Ornstein-Uhlenbeck process
\be 
X_t=y+ B_t+ \int_0^t (r X_s +b)ds
\ee
for some $r>0$. Then, $X$ is transient and has the scale function 
\be\label{e:OUscale}
s(x)=\sqrt{\frac{r}{\pi}}\int_{-\infty}^x \exp\left(-r\left(y+\frac{b}{r}\right)^2\right)dy,
\ee
with $s(-\infty)=1-s(\infty)=0$. The equation (\ref{e:sdeLbridge}) reads in this case as
\bea 
X_t &=& y +B_t  + \int_0^t (r X_s +b)ds \nn  \\
&&+ \int_0^{t \wedge \tau^y_{a-}} \left\{ \frac{s'(X_s)}{s(X_s)}\chf_{[X_s \leq y]} -\frac{s'(X_s)}{1-s(X_s)}\chf_{[ X_s > y]} \right\}ds \label{e:OULyBridge} \\
&&+ \int_{t \wedge \tau^y_{a-}}^t\sigma^2(X_s)\left\{\theta \chf_{[X_s> y]}\frac{s'(X_s)}{s(X_s)-s(y)}-(1-\theta)\chf_{[X_s< y]}\frac{s'(X_s)}{s(y)-s(X_s)}\right\}ds. \nn
\eea

We already know that there exists a  weak solution, which is unique in law. The SDE above in  fact  possesses a unique strong solution. Indeed, since $\sigma \equiv 1$,  Lemma IX.3.3, Corollary IX.3.4 and Proposition IX.3.2 in \cite{RY} imply that pathwise uniqueness holds for the SDEs  (\ref{e:sdel}) and (\ref{e:sder}) associated with the Ornstein-Uhlenbeck process above. Moreover, the auxiliary SDE
\[
dX_t= d\beta_t + \left\{ \frac{s'(X_t)}{s(X_t)}\chf_{[ X_t \leq y]} -\frac{s'(X_t)}{1-s(X_t)}\chf_{[ X_t > y]}\right\}dt
\]
has pathwise uniqueness until the first exit time from any bounded interval  by part i of Theorem IX.3.5 since the drift coefficient is bounded in compact subsets of $\bbR$.   This establishes the  pathwise uniqueness for the solutions of (\ref{e:step1}). Thus, in view of the celebrated result of Yamada and Watanabe (see Corollary 5.3.23 in \cite{KS}), there exists a unique strong solution to (\ref{e:step1}), hence, to (\ref{e:OULyBridge}).

\subsection{Squared Bessel process} Now, $X$ is a squared Bessel process on $(0, \infty)$ of order $\delta>2$, i.e.
\[
X_t=y+ \int_0^t 2\sqrt{X_s}dB_s + \delta t.
\]
Note that a scale function is given by $s(x)= 1-x^{\frac{2-\delta}{2}}$. Thus, the equation (\ref{e:sdeLbridge}) reads
\bean
X_t&=&y+ \int_0^t 2\sqrt{X_s}dB_s + \delta t \\
&&-2(\delta-2) \int_0^{t \wedge \tau^y_{a-}}\chf_{[X_s>y]}ds  + 2(\delta-2) \int_{t \wedge \tau^y_{a-}}^t\frac{X_s^{\frac{2-\delta}{2}}}{y^{\frac{2-\delta}{2}}-X_s^{\frac{2-\delta}{2}}}ds.
\eean
Observe that we do not need to introduce the random variable $\theta$ since $\rho(y)=1$.

As in the previous example we can show that the solution is in fact the unique strong solution once we show that the following SDE has pathwise uniqueness:
\[
X_t=y+ \int_0^t 2\sqrt{X_s}dB_s + \delta t -2(\delta-2) \int_0^{t}\chf_{[X_s>y]}ds.
\]
Indeed, if $R$ and $X$ are two strong solutions then
\bean
|R_t -X_t| &=& 2 \int_0^t \mbox{sgn}(R_s-X_s) (\sqrt{R_s}-\sqrt{X_s})dB_s\\
&& -2(\delta-2)\int_0^{t} \mbox{sgn}(R_s-X_s)  (\chf_{[R_s>y]}-\chf_{[X_s>y]})ds + L_t\\
&\leq & \int_0^t \mbox{sgn}(R_s-X_s) (\sqrt{R_s}-\sqrt{X_s})dB_s  + L_t,
\eean
where $L$ is the semimartingale local time of $R-X$ at $0$ and $\mbox{sgn}(x)= 1$ if $x > 0$ and $-1$, otherwise. If $\tau_x:=\inf\{t\geq 0: |R_t-X_s|>x\}$, then the stochastic integral stopped at $\tau_x$ is a true martingale since $|\sqrt{z}-\sqrt{z'}|< \sqrt{|z-z'|}|$. Thus,
\[
E|R_{t\wedge\tau_x}-X_{t\wedge\tau_x}| \leq E L_{t\wedge \tau_x}.
\]
However, $L\equiv 0$ by Lemma IX.3.3 in \cite{RY} since, again, $|\sqrt{z}-\sqrt{z'}|< \sqrt{|z-z'|}|$. 
\bibliographystyle{siam}
\bibliography{ref}
\appendix
\section{Proof of Theorem \ref{t:SDEent}}
\begin{proof}[Proof of Theorem \ref{t:SDEent}]
Suppose that $l$ is an entrance boundary and  assume $l=0$ without loss of generality.  Since $0$ is entrance, we must have $s(0)=-\infty$. Although $s(r)=1$ by assumption, let's apply a further affine transformation and assume $s(r)=0$. Also note that if $r$ is a singular boundary,  transience of  $X$ implies $r$ is not entrance.
Consider a $3$-dimensional Bessel process, $R$, on $(0,\infty)$  starting at $0$, i.e
\[
R_t= B_t +\int_0^t \frac{1}{R_s}ds.
\]
It is well-known that $0$ is an entrance boundary for $R$ (see \cite{BorSal} for  a summary of results on Bessel processes), whose scale function is given by $p(x):=-\frac{1}{x}$. We will construct a weak solution of (\ref{e:SDEent}) by change of time and scale applied to $R$ following the ideas of Engelbert and Schmidt (see, e.g., Section 5.5 of \cite{KS}). 

Observe that $s$ is a $C^1$-function whose derivative is absolutely continuous. It is clear that these properties are inherited by its inverse, $s^{-1}$. Since $s^{-1}(-\infty)=0$, it follows from Ito's formula and the fact that 
\[
\frac{\sigma^2}{2}s'' + b s'=0,
\]
 $Y=s^{-1}(p(R))$ is a semimartingale satisfying
\be \label{e:Besscaled}
Y_t=\int_0^t \frac{1}{R_u^2 s'\left(s^{-1}\left(p(R_u)\right)\right)}dB_u + \int_0^t \frac{b\left(s^{-1}\left(p(R_u)\right)\right)}{\sigma^2\left(s^{-1}\left(p(R_u)\right)\right)\left[R_u^2 s'\left(s^{-1}\left(p(R_u)\right)\right)\right]^2}du
\ee
Consider the additive functional
\[
A_t:=\int_0^t \frac{1}{\sigma^2\left(s^{-1}\left(p(R_u)\right)\right)\left[R_u^2 s'\left(s^{-1}\left(p(R_u)\right)\right)\right]^2}du,
\]
and its right-continuous inverse
\[
T_t:=\inf\{s\geq 0: A_s >t\}.
\]
We also define $A_{\infty}=\lim_{t \rar \infty} A_t$ and $T_{\infty}=\lim_{t \rar \infty} T_t$.

It follows from the occupation times formula that
\[
A_t=2 \int_0^{\infty} \frac{\tilde{l}^x_t}{x^2\sigma^2\left(s^{-1}\left(p(x)\right)\right)\left(s'(s^{-1}(p(x))\right)^2}dx=2\int_0^r \frac{\tilde{l}^{-\frac{1}{s(x)}}_t}{\sigma^2\left(x\right)s'(x)}dx,
\]
where $\tilde{l}^x$ is the diffusion local time for $R$. Since $R_t \rar \infty$, a.s. and never visits $0$ again, we immediately deduce that for every $t$, the  mapping $x \mapsto \tilde{l}^x_t$ has a compact support, a.s., that is contained in $(0,\infty)$. Since it is also \cadlag, it is bounded. This implies that $A_t<\infty$, a.s., since for any $0<z<u<r$
\[
2\int_z^u \frac{1}{\sigma^2\left(x\right)s'(x)}dx = m((z,u))<\infty.
\]
The same reasoning also yields that $A_{T_x}<\infty$, where $T_x= \inf\{t\geq 0:R_t=x\}$ for some $x\in (0,\infty)$. Thus, the strong Markov property implies
\[
Q^0(A_{\infty}<\infty)=Q^x(A_{\infty}<\infty),
\]
where $Q^x$ is the law of a $3$-dimensional Bessel process starting at $x$.  Mijatovic and Urusov show in Theorem 2.11  \cite{MU} that  $Q^x(A_{\infty}<\infty)=1$ (resp. $=0$) if 
\[
\int_z^{\infty} \frac{1}{x^3\sigma^2\left(s^{-1}\left(p(x)\right)\right)\left(s'(s^{-1}(p(x))\right)^2}dx <\infty \mbox{ (resp. $=\infty$)})
\]
for some $z$.
After a change of variable the above integral turns into
\[
-\int_{s^{-1}(p(z))}^r\frac{s(x)}{\sigma^2(x)s'(x)}dx,
\]
which is finite if $r$ is a regular boundary. Consequently, $A_{\infty}<\infty$, a.s. when $r$ is regular.

If $r$ is a singular boundary, we have already observed at the beginning of the proof that it is not entrance. If it is exit, the  above integral will be finite, too. Indeed, that $r$ is exit implies
\[
\int_z^r m((z,x))s'(x)dx <\infty.
\]
Also, note that $\lim_{x \rar r} s(x)m((z,x)$ exists and equals
\[
-\int_{z}^r\frac{s(x)}{\sigma^2(x)s'(x)}dx-\int_z^r m((z,x))s'(x)dx
\]
under the assumption that $r$ is an exit boundary. On the other hand, since $s(r)=0$ and 
\[
\int_z^r \frac{s'(x)}{s(x)}dx=\log s(z)-\log s(r)=\infty,
\]
we deduce that $\lim_{x \rar r} s(x)m((z,x)=0$ since 
\[
\infty > \int_z^r m((z,x))s'(x)dx=\int_z^r s(x)m((z,x))\frac{s'(x)}{s(x)}dx.
\]
Therefore, for any $z \in (0,r)$
\[
-\int_{z}^r\frac{s(x)}{\sigma^2(x)s'(x)}dx=\int_z^r m((z,x))s'(x)dx<\infty,
\]
and $A_{\infty}$ is finite. 

If $r$ is a natural boundary, let $r_n$ be a sequence of numbers in $(l,r)$ increasing to $r$ and observe that
\bean
-\int_{z}^r\frac{s(x)}{\sigma^2(x)s'(x)}dx&=& \lim_{n \rar \infty} \left\{-s(r_n)m((z,r_n)) + \int_z^{r_n} m((z,x))s'(x)dx\right\}\\
&\geq & \lim_{n \rar \infty} \int_z^{r_n} m((z,x))s'(x)dx=\infty.
\eean
Thus, $A_{\infty}=\infty$, a.s., when $r$ is a natural boundary. 

The behaviour of $A$ near infinity affects the finiteness of $T$. If $A_{\infty}<\infty$, then $T_t =\infty$ on $[t \geq A_{\infty}]$. Otherwise, $T_t <\infty$, for every $t$. Moreover, it is easy to see that $A$ is continuous and strictly increasing while $T$ is  continuous everywhere and strictly increasing on $[0, A_{\infty})$. Moreover, $t=A_{T_t}$ for $t \leq  A_{\infty}$ and $t=T_{A_{t}}$ for every $t <\infty$.

With the above characterisation of  $A$ and $T$, let us next consider $X_t:= Y_{T_t}$ and $\cG_t=\cF_{T_t}$, where $(\cF_t)$ is the universal completion of the natural filtration of $R$. Consequently, $(\cG_t)$ satisfies the usual conditions since $(T_t)$ is continuous. Then, it is straightforward to check that 
\[
M_t:=\int_0^{T_t} \frac{1}{R_u^2 s'\left(s^{-1}\left(p(R_u)\right)\right)}dB_u
\]
is a continuous $(\cG_t)$-local martingale with
\[
[M,M]_t=\int_0^{T_t} \frac{1}{\left[R_u^2 s'\left(s^{-1}\left(p(R_u)\right)\right)\right]^2}du.
\]
On the set $[T_t<\infty]=[t<A_{\infty}]$, the above can be rewritten as
\[
[M,M]_t=\int_0^{T_t} \sigma^2(s^{-1}(p(R_u)))dA_u=\int_0^t \sigma^2(X_u)du.
\]
Thus, there exists a $(\cG_t)$-Brownian motion, $\beta$, such that
\[
M_t =\int_0^t \sigma(X_u)d\beta_u.
\]
Similarly, on $[t<A_{\infty}]$
\[
\int_0^{T_t} \frac{b\left(s^{-1}\left(p(R_u)\right)\right)}{\sigma^2\left(s^{-1}\left(p(R_u)\right)\right)\left[R_u^2 s'\left(s^{-1}\left(p(R_u)\right)\right)\right]^2}du=\int_0^t b(X_u)du.
\]
Thus, we have proved on $[t<A_{\infty}]$
\[
X_t=\int_0^t \sigma(X_u)d\beta_u+ \int_0^t b(X_u)du.
\]
On the other hand, on $[t \geq A_{\infty}]$, $T_t=\infty$, and $X_t= Y_{\infty}=s^{-1}(p(R_{\infty}))=s^{-1}(p(\infty))=r$. Furthermore, since $s^{-1}\circ p$ is one-to-one, $A_{\infty}=\inf\{t \geq 0: X_t=r\}$. This shows the existence of a weak solution to (\ref{e:SDEent}) as soon as we verify that $\zeta =A_{\infty}$. However, this immediately follows from the fact that any diffusion that satisfies (\ref{e:SDEent}) has the scale and speed given by $s$ and $m$, respectively, which in turn yields that $0$ is an entrance boundary  and, therefore, inaccessible. 

To show uniqueness let $X$ be a weak solution and $Y=p^{-1}(s(X))$ so that on $[t <\zeta]$
\[
Y_t= \int_0^t \frac{s'(X_u)\sigma(X_u)}{s^{2}(X_u)}dB_u -\int_0^t \frac{\left(s'(X_u)\sigma(X_u)\right)^2}{s^3(X_u)}du.
\]
Consider
\[
T_t:=\int_0^t \frac{\left(s'(X_u)\sigma(X_u)\right)^2}{s^4(X_u)}du
\]
and its right continuous inverse $A_t:=\inf\{s \geq 0: T_s>t\}$. 

As before,
\[
T_t=2 \int_0^r\tilde{L}^x_t \frac{s'(x)}{s^4(x)}dx,
\]
where $\tilde{L}^x$ is the diffusion local time at $x$ for $X$. On the set $[t<\zeta]$, $\tilde{L}^x$ has compact support in $(0,r)$. Thus, $T_t<\infty$ on $[t<\zeta]$ since for $0<z<u<r$
\[
3\int_z^u \frac{s'(x)}{s^4(x)}dx= \frac{1}{s^3(z)}-\frac{1}{s^3(u)}<\infty.
\]
Similarly, $T_t$ is absolutely continuous and strictly increasing on $[0,\zeta)$.

Next observe that
\[
-\int_z^r  \frac{s(x)}{s'(x)\sigma^2(x)}\frac{\left(s'(x)\sigma(x)\right)^2}{s^2(x)}dx=-\int_z^r\frac{s'(x)}{s(x)}dx=\infty.
\]
Thus, Theorem 2.11 in \cite{MU} yields $T_t=\infty$ on $[t \geq \zeta]$. This, in particular, yields that $A_t<\infty$ for all $t<\infty$ and $A_{\infty}=\zeta$. 

Consider $X_t=Y_{A_t}$ and $\cG_t=\cF_{A_t}$, where $(\cF_t)$ is the universal completion of the natural filtration of $X$. Define the $(\cG_t)$-local martingale
\[
M_t= \int_0^{A_t} \frac{s'(X_u)\sigma(X_u)}{s^{2}(X_u)}dB_u 
\]
with
\[
[M,M]_t=\int_0^{A_t}  \frac{(s'(X_u))^2\sigma^2(X_u)}{s^{4}(X_u)}du=T_{A_t}.
\]
Thus, on the set $[A_t <\zeta]$, $[M,M]_t=t$ as well as
\[
-\int_0^{A_t} \frac{\left(s'(X_u)\sigma(X_u)\right)^2}{s^3(X_u)}du= \int_0^t \frac{1}{X_u}du.
\]
Let $\tau:=\inf\{t\geq 0: A_t=\zeta\}$. The above considerations show that on $[0,\tau)$
\[
X_t=\beta_t+\int_0^t \frac{1}{X_u}du.
\]
Using the continuity of $X$ we also deduce that $X_{\tau}=Y_{\zeta}=r$ and $X$ is, therefore, $3$-dimensional Bessle process starting at $0$ and stopped at $r$. Since the SDE for the $3$-dimensional Bessel process has a unique solution, we deduce that the distribution of $(Y_{A_t})$ is uniquely identified on $[A_t<\zeta]$. This in turn yields the weak uniqueness of the solutions of (\ref{e:SDEent}) on $[0,\zeta]$ since $A_t$ is strictly increasing on $[t\leq \tau]$ and $p^{-1}\circ s$ is one-to-one.  

If the entrance boundary is the right endpoint, i.e. $r$, suppose without loss of generality $r=0$ and consider the diffusion, $R$,  on $(-\infty, 0)$ defined by the SDE:
\[
R_t= B_t +\int_0^t\frac{1}{R_s}ds.
\]
This is the negative of a $3$-dimensional Bessel process and $0$ is its entrance boundary. Now, the above arguments can be repeated  to show the existence and the uniqueness of the weak solutions of (\ref{e:SDEent}).
\end{proof}

\end{document}